\documentclass{article}
\usepackage{amsmath, amssymb, amsthm, relsize, tikz-cd, graphicx}

\theoremstyle{plain}
\newtheorem{theorem}{Theorem}[section]
\newtheorem{lemma}[theorem]{Lemma}

\theoremstyle{definition}

\newcommand{\up}{\uparrow}

\begin{document}

\title{A Family of Bounded and Analytic Hyper-Operators}

\author{James David Nixon\\
	JmsNxn92@gmail.com\\}

\maketitle

\begin{abstract}
This is a summation of research done in the author's second and third year of undergraduate mathematics at The University of Toronto. As the previous details were largely scattered and disorganized; the author decided to rewrite the cumulative research. The goal of this paper is to construct a family of analytic functions $\alpha \uparrow^n z : (1,e^{1/e}) \times \mathbb{C}_{\Re(z) > 0} \to \mathbb{C}_{\Re(z) > 0}$ using methods from fractional calculus. This family satisfies the hyper-operator chain, $\alpha \uparrow^{n-1} \alpha \uparrow^n z = \alpha \uparrow^n (z+1)$; with the initial condition $\alpha \uparrow^0 z = \alpha \cdot z$.
\end{abstract}

\emph{Keywords:} Complex Analysis; Fractional Calculus; Complex Dynamics.\\

\emph{2010 Mathematics Subject Classification:} 30D05; 30B50; 37F10; 39B12; 39B32\\

\section{Introduction}\label{sec1}
\setcounter{equation}{0}

This paper is, in many ways, a rewrite of what I had discovered and written of six years past. As the previous work was, shoddy and a tad bit unprofessional--but largely the mathematics was correct (subtracting a few caveats); I decided to rewrite. This paper centers on an alternative method of expressing local fixed point iterations. This method largely avoids what one would traditionally call iteration theory, in favour of a language of integral transforms.\\

As to this, the author will frame his language using The Riemann-Liouville Differintegral. As to what work by the author which remains introducing this concept, we reference our research from a similar time frame, in \textit{On The Indefinite Sum In Fractional Calculus} \cite{Nix}. But, we shan't use anything from this paper; and the paper is written in a rather rigid style; and is a tad too lofty for the author's taste now.

As to this, we shall reintroduce the fractional derivative as we use it; and state the fundamental lemmas. Much of this work is standard in a study of Mellin Transforms; or any work pertaining to Ramanujan. The keystone of this arch, as it was before, is still Ramanujan's Master Theorem. However, we choose to write this theorem as an interpolation of derivatives. Which, henceforth the fractional derivative becomes the main center of focus, as opposed to the Mellin transform.

We needn't introduce the deepest depths of fractional calculus, we just need some of its basics. And next to it; we need some basics from Milnor's treatment of complex dynamics. With this, we only need the construction of the Schr\"{o}der function, and some normality theorems on functional iterates. 

The focus of this paper will be much more expository than expansive. There is much in this paper we could generalize. The purpose is solely to construct a chain of hyper-operators, and nothing else. We don't delve into the nitty gritty of how else this representation works. For such, we argue by example.

For that's what this paper is, an argument for a representation of complex iterations. This is done largely off-hand; and again, not to the depth that it can be done. The author has argued for this representation largely outside of publications, but it still exists in a nether-world. It isn't particularly part of mainstream iteration theory.\\

What we'll construct is a fascination, rather than a necessity. This is nothing important to the world, but is interesting to mathematicians; at least those I've met. That is the idea of a hyper-operator. Something similar will be done in this paper as was done in \textit{Hyper-operations by Unconventional Means} \cite{Nix2}. Excepting, in that paper we considered $e \up^n x$ for real $x$; and we only constructed a smooth solution. We won't use any of the tools from that paper though; the construction is much more sophisticated there; it is much more intuitive here, if one can grasp the fractional derivative as something no more complicated than the Laplace, or Fourier Transform.

As a formal sequence, we can call a hyper-operator chain, a sequence of functions $F_n$ such that,

\[
F_{n}(F_{n+1}(z)) = F_{n+1}(z+1)\\
\]

And we can require these things to be holomorphic, or smooth, or however. We are interested in a chain which satisfies the initial conditions,

\begin{eqnarray*}
F_0(z) &=& \alpha \cdot z\,\,\text{for}\,\,1 < \alpha < e^{1/e}\\
F_1(z) &=& \alpha^z\\
F_n(1) &=& \alpha\\
\end{eqnarray*}

This produces what is traditionally called a hyper-operator with base $\alpha$. We require these functions are holomorphic, and furthermore, that they satisfy specific normality conditions (we'll get to that later).

This paper is designed to promote an algorithm for producing this sequence. This algorithm is simply an iteration of integral transforms. Which is to say, we perform an integral transform $n$ times to produce our function $F_n$--minusing a couple of technical details. This is very similar to a Picard iteration; however, slightly more involved.

\section{The fractional derivative of interest}\label{sec2}
\setcounter{equation}{0}

There are quite a few fractional derivatives which exist in the world. But, the central difference being the base-point we assign; there is very little difference between them. Forgoing domains, and methods, much of the theory revolves on Cauchy's iterated integral formula--this can be found in most real analysis textbooks. This is written,

\[
\frac{d^{-n}}{dx^{-n}}f(x) = \frac{1}{\Gamma(n)}\int_a^x (x-t)^{n-1}f(t)\,dt\\
\]

Which accounts for $n$ integrals, with a base point of $a$, which is,

\[
\frac{d^{-n}}{dx^{-n}}f(x) = \int_a^x \int_a^{x_{n-1}}...\int_a^{x_1} f(t)\,dt dx_1...dx_{n-1}\\
\]

The specific case we want, is the integration which fixes the exponential $e^x$, which is given by $a = -\infty$. Which is,

\[
\int_{-\infty}^x e^t \,dt = e^x\\
\]

To derive a fractional derivative, we just set $n$ to complex values, and deride where it converges. For a fuller treatment on the history; and results extraneous to our study, we point to the reference \textit{The Fractional Calculus}, by Keith B. Oldham and Jerome Spanier \cite{Old}. But forgoing a deep dive into fractional calculus; we need next to nothing from this book, but it may ease the pace we go at.

Massaging our formula, and taking the short route, our fractional derivative is,

\[
\frac{d^{-z}}{dx^{-z}} f(x) = \frac{1}{\Gamma(z)}\int_0^\infty f(x-t)t^{z-1}\,dt\\
\]

This is nothing more than a modified Mellin transform. However, we don't want this to be our entire definition. As we're dealing with holomorphy, we're instead going to consider entire functions $f(w)$ for $w \in \mathbb{C}$; and we want a type of convergence in the complex plane. The reason we make the following jump is to avoid certain anomalous cases. If we take,

\[
\frac{d^{-z}}{dx^{-z}} \sin(x) = \sin(x-z\pi/2)
\]

This converges for $0 < \Re(z) < 1$; however this function will produce many anomalies in further studies. The trouble is actually typical of $\sin$ and functions like $\sin$; which we want to eliminate from our study. The function $\sin(x) \not \to 0$ as $x \to \infty$, but further $\sin(w)$ has exponential growth as $w \to \infty$; the Mellin transform won't make sense.

We want the Mellin transform to work in neighborhoods of the variable $w$; as to that, we have to consider arbitrary arcs $\gamma$; not just along the line $[0,\infty]$. To begin, call,

\[
S_\theta = \{w \in \mathbb{C}\,|\,|\arg(w)| < \theta\}
\]

And assume for all arcs $\gamma \in S_\theta$, such that $\gamma(0) = 0$ and $\gamma(\infty) = \infty$; that,

\[
\int_\gamma |f(t)|t^{\sigma - 1}\,dt < \infty\\
\]

For some $\sigma \in \mathbb{R}^+$. Then the differintegral of $f$ is written, for $0 < \Re(z) < \sigma$,

\[
\frac{d^{-z}}{dw^{-z}} f(w) = \frac{1}{\Gamma(z)} \int_\gamma f(w-y)y^{z-1}\,dy\\
\]

Which is equivalent for all paths $\gamma$ by Cauchy's integral theorem. Considering the differintegral as this, helps us treat later theorems with greater ease. This also allows us to frame an isomorphism; which was used in \cite{Nix}; but we'll reintroduce here.

Call,

\begin{align*}
\mathbb{S}_\theta &= \{f : \mathbb{C} \to \mathbb{C},\,f\,\text{is holomorphic}\,\text{s.t.}\, \int_\gamma|f(y)|\,dy < \infty\,\\
&\text{where}\,\gamma \subset S_\theta,\,\gamma(0) = 0,\,\gamma(\infty) = \infty\}\\
\end{align*}

Then there exists a correspondence between functions $F(z)$ holomorphic on $\mathbb{C}_{\Re(z) > -1}$ such that $|\Gamma(-z)F(z)| \le C_\delta e^{(\delta-\theta)|\Im(z)|}$ for all $\delta > 0$ where $C_\delta \in \mathbb{R}^+$ while $z \not \in \mathbb{R}$. This is the central theorem we present in this section.

It's interesting to follow the history of this result, as it is to follow the math. The first observation is due to Euler. As this analytic continuation is first discovered by him, we can call it Euler's Analytic Continuation Theorem.

\begin{theorem}[Euler's Analytic Continuation Theorem]\label{thmEUL}
Suppose that $f \in \mathbb{S}_\theta$; then the function,

\[
\frac{d^{-z}}{dw^{-z}}f(w) = \frac{1}{\Gamma(z)}\int_\gamma f(w-y)y^{z-1}\,dy\,\,\text{for}\,\,0 < \Re(z) < 1\\
\]

Can be analytically continued to,

\[
\Gamma(z)\frac{d^{-z}}{dw^{-z}}f(w) = \sum_{n=0}^\infty f^{(n)}(w)\frac{(-\gamma(1))^n}{n!(n+z)} + \int_{\gamma[1,\infty)} f(w-y)y^{z-1}\,dy\,\,\text{for}\,\,\Re(z) < 1\\
\]
\end{theorem}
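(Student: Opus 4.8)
The plan is to split the defining integral at the point $\gamma(1)$ on the arc, writing
\[
\Gamma(z)\frac{d^{-z}}{dw^{-z}}f(w) = \int_{\gamma[0,1]} f(w-y)y^{z-1}\,dy + \int_{\gamma[1,\infty)} f(w-y)y^{z-1}\,dy.
\]
The second piece is already entire in $z$: since $f \in \mathbb{S}_\theta$ controls $\int_\gamma |f(t)|t^{\sigma-1}\,dt$ and on $\gamma[1,\infty)$ the factor $y^{z-1}$ only improves convergence as $\Re(z)$ decreases, that integral converges locally uniformly for all $\Re(z) < \sigma$ (in particular for $\Re(z) < 1$) and defines a holomorphic function of $z$ there by Morera/differentiation under the integral sign. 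So all the work is in the first piece.

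For the piece over $\gamma[0,1]$, the idea is to expand $f$ in its Taylor series about $w$. Since $f$ is entire, $f(w-y) = \sum_{n=0}^\infty f^{(n)}(w)\frac{(-y)^n}{n!}$, and on the compact arc $\gamma[0,1]$ this converges uniformly (uniformly in $y$, for fixed $w$, with the usual Cauchy-estimate bounds on $f^{(n)}(w)$). First I would justify interchanging the sum and the integral — legitimate for $\Re(z) > 0$ because $|y^{z-1}|$ is integrable near $0$ there and the series converges uniformly on the arc — to get
\[
\int_{\gamma[0,1]} f(w-y)y^{z-1}\,dy = \sum_{n=0}^\infty \frac{(-1)^n f^{(n)}(w)}{n!}\int_{\gamma[0,1]} y^{n+z-1}\,dy.
\]
Then, parametrising the arc or invoking Cauchy's theorem to deform $\gamma[0,1]$ to the straight segment from $0$ to $\gamma(1)$ (valid since $y^{n+z-1}$ is holomorphic on the simply connected sector minus the branch cut, which the arc avoids), one evaluates $\int_{\gamma[0,1]} y^{n+z-1}\,dy = \frac{\gamma(1)^{n+z}}{n+z}$. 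Absorbing the $y^z$-type factor appropriately — here one must be a little careful about which branch and whether the stated formula wants $\gamma(1)^{n+z}$ or, as written, $(-\gamma(1))^n$ times something; I would reconcile the sign by noting $(-y)^n = (-1)^n y^n$ and the $z$-dependent part $\gamma(1)^z$ is presumably normalised into the differintegral or suppressed — yields the term $\sum_n f^{(n)}(w)\frac{(-\gamma(1))^n}{n!(n+z)}$.

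The last step is the analytic continuation itself: having matched the two expressions on the strip $0 < \Re(z) < 1$ where both the original integral and the new series-plus-tail are defined, I would argue that the right-hand side is holomorphic on the larger half-plane $\Re(z) < 1$. The tail integral is entire as noted. For the series $\sum_n f^{(n)}(w)\frac{(-\gamma(1))^n}{n!(n+z)}$, the summands are meromorphic in $z$ with simple poles only at the negative integers $z = -n$, and the Cauchy estimate $|f^{(n)}(w)| \le M_R n!/R^n$ (taking $R$ large, using $f$ entire) makes the series converge locally uniformly away from the poles; hence it defines a meromorphic function on $\Re(z)<1$ with poles at the nonpositive integers, which are exactly the poles of $\Gamma(z)$ on the left of the line, consistent with the $\Gamma(z)$ prefactor. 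By the identity theorem, the continuation agrees with the original on the overlap, so the displayed formula holds for all $\Re(z) < 1$.

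The main obstacle I expect is not any single estimate but the bookkeeping around the path $\gamma$ and the branch of $y^{z-1}$: one has to be sure that deforming the sub-arc $\gamma[0,1]$ to a straight segment stays inside $S_\theta$ (away from the branch cut of $y^{z-1}$), that the endpoint value $\gamma(1)$ is path-independent in the relevant sense, and that the interchange of summation and integration near the singular endpoint $y = 0$ is rigorously licensed — the series converges uniformly on $\gamma[0,1]$ but $y^{z-1}$ blows up at $0$, so the clean justification is to fix $\Re(z) > 0$ first, prove the identity there, and only then continue. Reconciling the precise form of the constant term (the $(-\gamma(1))^n$ versus $\gamma(1)^{n+z}$ discrepancy in the statement) is the one place where I would slow down and check the normalisation conventions carefully.
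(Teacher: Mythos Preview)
Your approach is essentially identical to the paper's: split the contour integral at $\gamma(1)$, Taylor-expand $f(w-y)$ about $w$ on the compact piece and integrate term by term to produce the $\sum_n f^{(n)}(w)(-\gamma(1))^n/(n!(n+z))$ series, and observe that the tail over $\gamma[1,\infty)$ is holomorphic for $\Re(z)<1$. Your write-up is considerably more careful than the paper's sketch, and your hesitation about the constant $\gamma(1)^{n+z}$ versus $(-\gamma(1))^n$ is well-founded --- the paper's own proof leaves that factor of $\gamma(1)^z$ unresolved as well.
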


\begin{proof}
Break the integral into two pieces $\int_0^\infty = \int_0^1 + \int_1^\infty$. The integral over $\int_0^1$ converges to the series,

\[
\int_{\gamma[0,1]} f(w-y)y^{z-1}\,dy = \sum_{n=0}^\infty \frac{f^{(n)}(w)(-1)^n}{n!}\int_0^{\gamma(1)} y^{n+z-1}\,dy\\ 
\]

Which converges as the first sum, which is holomorphic where ever $z\neq -n$, but at $z=-n$ we have a simple pole, and $\frac{1}{\Gamma}$ has a simple zero; where there limit is $f^{(n)}(w)$. The second integral converges for $\Re(z) < 1$.
\end{proof}

We attribute this result to Euler mostly by way of learning of this technique through him. As far as the author knows, this is his own result; he's yet to see it published elsewhere. There exists a paper which did much of the similar analysis; but lacked this language \cite{ZAG}. But he learned it by way of Euler; who used it as a technique to analytically continue the $\Gamma$-function. Which is written,

\[
\Gamma(z) = \sum_{n=0}^\infty \frac{(-1)^n}{n!(n+z)} + \int_1^\infty e^{-t}t^{z-1}\,dt\\
\]

Which is valid in the complex plane as a meromorphic function. One can instantly see the similarity between our analytic continuation, and the $\Gamma$-function. The $\Gamma$-function sort of acts as a normalizer. But, going even further; it aids perfectly in interpolation theory as well. 

Before going on, we'd like to add a result by Stirling. We will not prove this result, but reference its appearance in Reinhold Remmert's \textit{Classical Topics in Complex Function Theory} \cite{Rem}. Remmert does a fantastic job of providing not only the theorem, but an in depth examination of the error term. This result is written slightly differently than one may be used to, as it deals with the $\Gamma$-function for complex values.

\begin{theorem}[Stirling's Theorem]
The function $\Gamma(z)$ satisfies the asymptotic equation,

\[
\Gamma(z) \sim \sqrt{2\pi} z^{z-\frac{1}{2}}e^{-z}\,\,\text{as}\,\,|z|\to\infty\,\,\text{while}\,\,|\arg(z)| < \pi\\
\]

And,

\[
|\Gamma(x+iy)| \sim \sqrt{2\pi}|y|^{x-\frac{1}{2}} e^{-\pi |y|/2}\\
\]
\end{theorem}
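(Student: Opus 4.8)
The statement is classical and, as indicated in the text, one may simply cite Remmert; but here is the route I would take. The plan is to prove the asymptotic $\Gamma(z)\sim\sqrt{2\pi}\,z^{z-1/2}e^{-z}$ first in the right half-plane, then push it out to $|\arg z|<\pi$, and finally read off the second formula. For $\Re(z)>0$ I would start from the Euler integral $\Gamma(z+1)=\int_0^\infty t^{z}e^{-t}\,dt$ and apply Laplace's method. Rescaling $t=zs$ — legitimate by Cauchy's theorem since the integrand decays in the intervening sector — turns this into $z^{z+1}\int_0^\infty e^{z\varphi(s)}\,ds$ with phase $\varphi(s)=\log s-s$, which has a unique critical point at $s=1$ with $\varphi(1)=-1$, $\varphi'(1)=0$, $\varphi''(1)=-1$. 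The quadratic expansion about $s=1$ together with the Gaussian integral $\int_{-\infty}^{\infty}e^{-u^2/2}\,du=\sqrt{2\pi}$ gives $\int_0^\infty e^{z\varphi(s)}\,ds\sim e^{-z}\sqrt{2\pi/z}$, hence $\Gamma(z+1)\sim\sqrt{2\pi}\,z^{z+1/2}e^{-z}$ and $\Gamma(z)\sim\sqrt{2\pi}\,z^{z-1/2}e^{-z}$, uniformly on each sub-sector $|\arg z|\le\pi/2-\delta$ and, with more care on the rotated contour, on $|\arg z|\le\pi/2$. (One could instead quote Binet's first integral, \[ \log\Gamma(z)=\left(z-\tfrac12\right)\log z-z+\tfrac12\log(2\pi)+2\int_0^\infty\frac{\arctan(t/z)}{e^{2\pi t}-1}\,dt, \] valid for $\Re(z)>0$, and observe that the last integral tends to $0$; but I prefer the self-contained Laplace argument, and in either case the constant $\sqrt{2\pi}$ should be double-checked against the Legendre duplication formula $\Gamma(z)\Gamma(z+\tfrac12)=2^{1-2z}\sqrt{\pi}\,\Gamma(2z)$.)

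To extend to $|\arg z|<\pi$ I would use the reflection formula $\Gamma(z)\Gamma(1-z)=\pi/\sin(\pi z)$. When $\pi/2\le\arg z\le\pi-\delta$ the number $1-z$ has large positive real part, so $\Gamma(1-z)$ is governed by the first step; meanwhile $|\sin(\pi z)|=\tfrac12\,|e^{i\pi z}-e^{-i\pi z}|$ has a clean exponential asymptotic once $z$ is bounded away from the real axis, and dividing and simplifying the powers of $z$ and $1-z$ recovers $\Gamma(z)\sim\sqrt{2\pi}\,z^{z-1/2}e^{-z}$ there; the sector $-\pi+\delta\le\arg z\le-\pi/2$ is symmetric. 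Note that ``$|\arg z|<\pi$'' has to be read as ``uniformly on $|\arg z|\le\pi-\delta$ for every $\delta>0$'', since $\Gamma$ has poles accumulating along the negative real axis and no asymptotic can hold right up to it.

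The second formula then falls out by taking moduli. Fix $x\in\mathbb{R}$ and let $|y|\to\infty$ in $z=x+iy$; then $|z|\to\infty$ with $\arg z\to\pm\pi/2$ (the sign being that of $y$), so we are safely inside $|\arg z|<\pi$. Writing $z^{z-1/2}=e^{(z-1/2)\log z}$ with the principal logarithm, $\bigl|z^{z-1/2}\bigr|=|z|^{x-1/2}e^{-y\arg z}$, so the first asymptotic gives $|\Gamma(z)|\sim\sqrt{2\pi}\,|z|^{x-1/2}e^{-y\arg z}e^{-x}$. Since $\arg z=\pm\tfrac{\pi}{2}\mp\arctan(x/y)$ one has $-y\arg z=-\tfrac{\pi}{2}|y|+x+o(1)$ and $|z|=|y|(1+o(1))$, whence the $e^{\pm x}$ factors cancel and $|\Gamma(x+iy)|\sim\sqrt{2\pi}\,|y|^{x-1/2}e^{-\pi|y|/2}$, as claimed.

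The step I expect to be the main obstacle is the uniformity of the Laplace-method error as $\arg z\to\pm\pi/2$: there the naive ray of integration and the true steepest-descent contour separate, and one must justify the deformation and bound the resulting tails uniformly — this is exactly the place where a careless argument breaks. Coupled to it is the bookkeeping of the branch cuts of $z^{z-1/2}$ and $(1-z)^{1/2-z}$ when the reflection formula is invoked near $\arg z=\pi$. Everything else — the quadratic Laplace expansion, the identification of the constant, and the modulus computation — is routine.
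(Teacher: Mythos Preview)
Your proposal is correct, but there is nothing to compare it against: the paper does not prove Stirling's Theorem at all. It explicitly states ``We will not prove this result, but reference its appearance in Reinhold Remmert's \textit{Classical Topics in Complex Function Theory},'' and then simply uses the asymptotics as a black box in the proof of Ramanujan's Master Theorem and in Theorem~\ref{thmDIFFBND}. So your Laplace-method derivation in the right half-plane, extension via the reflection formula, and modulus computation for the vertical-line estimate are all surplus to what the paper itself supplies; they constitute a self-contained proof where the paper offers only a citation. Your identification of the delicate point---uniformity of the saddle-point error as $\arg z\to\pm\pi/2$ and the branch bookkeeping near $\arg z=\pi$---is accurate, and is precisely the content one would find worked out in Remmert.
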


The following result is attributed to Ramanujan; a full proof can be found in \cite{RAM}; we'll only write a small sketch here. We'll also restrict it to functions of our interest; and change the language sufficiently.

\begin{theorem}[Ramanujan's Master Theorem]\label{thmRAM}
Suppose that $H(z)$ is holomorphic for $\Re(z) > -1$. Suppose it satisfies the bounds,

\[
|\Gamma(-z)H(z)| \le C_\delta e^{(\delta-\theta) |\Im(z)|}\,\,\text{for all}\,\,\delta>0\,\,\text{and}\,\,C_\delta\in \mathbb{R}^+,\,\text{while} \,|\arg(z+1)| \le \pi/2\\
\]

Then the function,

\[
f(w) = \sum_{n=0}^\infty H(n)\frac{w^n}{n!}\\
\]

satisfies,

\[
\frac{d^z}{dw^z}\Big{|}_{w=0}f(w) = H(z)\\
\]

Or as a differintegration,

\[
\frac{d^z}{dw^z} f(w) = \sum_{n=0}^\infty H(z+n)\frac{w^n}{n!}\\
\]
\end{theorem}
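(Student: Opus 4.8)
The plan is to prove Ramanujan's Master Theorem in the stated form by combining the Euler analytic continuation from Theorem~\ref{thmEUL} with Stirling's asymptotics, so that the fractional derivative at $w=0$ literally reads off the coefficients of the series defining $f$. First I would verify that $f(w) = \sum_{n=0}^\infty H(n) w^n/n!$ is entire and lies in $\mathbb{S}_\theta$: the bound $|\Gamma(-z)H(z)| \le C_\delta e^{(\delta-\theta)|\Im(z)|}$ together with Stirling's estimate $|\Gamma(x+iy)| \sim \sqrt{2\pi}|y|^{x-1/2}e^{-\pi|y|/2}$ forces $|H(n)|$ to decay fast enough (indeed like $1/\Gamma(-n)$ blows up, but the reflection $\Gamma(-z)\Gamma(z+1) = -\pi/\sin(\pi z)$ converts the hypothesis into a bound of the form $|H(z)| \le C_\delta' e^{(\delta - \theta + \pi/2)|\Im(z)|} / |\Gamma(z+1)|$ type control along vertical lines) to make the sum converge and to control it on sectors $S_\theta$; this is what licenses applying the differintegral to $f$.

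Next I would compute $\frac{d^z}{dw^z} f(w)$ directly from the integral definition. The cleanest route is to work term by term: for $0 < \Re(z) < 1$ and a path $\gamma$, write $\frac{d^{-s}}{dw^{-s}} f(w) = \frac{1}{\Gamma(s)}\int_\gamma f(w-y)y^{s-1}\,dy$, expand $f(w-y) = \sum_n H(n)(w-y)^n/n!$, and interchange sum and integral (justified by the absolute convergence established in step one). Each term produces a Beta-type integral; after the dust settles one gets $\frac{d^{-s}}{dw^{-s}}f(w) = \sum_{n=0}^\infty H(n-s) w^n/n!$ on the strip of convergence, equivalently $\frac{d^z}{dw^z}f(w) = \sum_n H(z+n)w^n/n!$ with $z = -s$. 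Setting $w = 0$ kills every term but $n=0$, giving $H(z)$. Then I would invoke Theorem~\ref{thmEUL} to analytically continue this identity in $z$ past the initial strip: the Euler continuation of the differintegral is a meromorphic (indeed holomorphic, once divided by $\Gamma$) function of $z$, and $H(z)$ is holomorphic for $\Re(z) > -1$, so by the identity theorem the equality $\frac{d^z}{dw^z}|_{w=0} f(w) = H(z)$ propagates to the full half-plane $\Re(z) > -1$, and the displayed differintegration formula follows by the same term-by-term argument now valid by continuation.

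The main obstacle I expect is the justification of the interchange of summation and integration over the unbounded path $\gamma$, i.e. controlling $\sum_n |H(n)| \int_\gamma |w-y|^n |y|^{\Re(z)-1}\,|dy| / n!$ uniformly. Near $y=0$ this is routine (the $y^{z-1}$ singularity is integrable for $\Re(z)>0$), but near $y=\infty$ along $\gamma$ one must show $f(w-y)$ decays fast enough against $y^{z-1}$; this is exactly where the hypothesis on $|\Gamma(-z)H(z)|$ has to be converted, via the reflection formula and Stirling, into genuine decay of $f$ on the sector $S_\theta$ — and one must check the sector angle $\theta$ matches the one in the exponential bound so that the Mellin-type integral converges. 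A secondary subtlety is making sure the Euler continuation's series $\sum_n f^{(n)}(0)(-\gamma(1))^n/(n+z)\,n!$ is legitimately evaluated at $w=0$ and that the pole/zero cancellation between $\Gamma(z)$ and the $1/(n+z)$ factors is handled so that the value at $z = -n$ agrees with $H(-n)$; this is a bookkeeping exercise but it is where the statement could fail if $H$ had unexpected poles, which the hypothesis $\Re(z) > -1$ is precisely designed to forbid.
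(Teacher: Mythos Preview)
Your approach has a genuine gap at the term-by-term step. You propose to write
\[
\frac{1}{\Gamma(s)}\int_\gamma f(w-y)\,y^{s-1}\,dy \;=\; \frac{1}{\Gamma(s)}\sum_{n=0}^\infty \frac{H(n)}{n!}\int_\gamma (w-y)^n\, y^{s-1}\,dy,
\]
but for the differintegral with base point $-\infty$ the individual integrals $\int_\gamma (w-y)^n y^{s-1}\,dy$ along an unbounded arc diverge for every $n\ge 0$: polynomials are not in $\mathbb{S}_\theta$. So this is not merely a dominated-convergence issue; each summand is already infinite, and there is no ``Beta-type'' integral here (Beta integrals live on a finite interval, which corresponds to the base point $0$, not $-\infty$). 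The obstacle you flag at the end is therefore more serious than you anticipate: it is not a matter of bounding a sum of absolute values but of every term being meaningless.

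The paper avoids this entirely by running the argument in the opposite direction. Rather than start from the power series for $f$ and push it through the Mellin integral, it begins with the Barnes integral
\[
f(w,z)=\frac{1}{2\pi i}\int_{\sigma-i\infty}^{\sigma+i\infty}\Gamma(s)\,H(z-s)\,(-w)^{-s}\,ds,
\]
whose convergence on $S_\theta$ follows from the hypothesis on $|\Gamma(-z)H(z)|$ together with Stirling, and then closes the contour to the left to pick up the residues at $s=-n$, yielding $\sum_{n\ge 0} H(z+n)\,w^n/n!$. Since that same Barnes integral is by construction an inverse Mellin transform, Mellin inversion immediately gives $\int_0^\infty f(-y,z)\,y^{\mu-1}\,dy=\Gamma(\mu)H(\mu+z)$, which at $z=0$ is the claim. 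No term-by-term integration of a power series along an infinite ray is needed. Your Euler-continuation idea could be rescued if you split $\int_0^\infty=\int_0^1+\int_1^\infty$ and only expand on the compact piece, but you would then still need an independent identification of the tail contribution with $H$, and that identification is essentially the Mellin-inversion step the paper uses.
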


\begin{proof}
Take the function $H(z)$ and consider, for $0<\sigma<1-\Re(z)$,

\[
f(w,z) = \frac{1}{2\pi i} \int_{\sigma- i\infty}^{\sigma + i\infty} \Gamma(s)H(z-s)(-w)^{-s}\,ds\\
\]

By the asymptotics of the $\Gamma$-function (Stirling's Theorem), we get that this integral converges $w \in S_\theta$. By Cauchy's Residue Theorem, and a quick estimation using Stirling, we get that this equals the sum of all the residues of the singularities in the left half plane. 

\[
f(w,z) = \sum_{n=0}^\infty \text{Res}(s=-n, \Gamma(s)H(z-s)(-w)^{-s})
\]

But,

\[
\text{Res}(s=-n, \Gamma(s)H(z-s)(-w)^{-s}) = H(z+n)\frac{w^n}{n!}\\
\]

On our other foot though; this is The Inverse Mellin Transform; so therefore,

\[
\int_0^\infty f(-y,z)y^{\mu-1}\,dy = \Gamma(\mu) H(\mu + z)\\ 
\]

Which for $z=0$ is the statement of Ramanujan's Master Theorem.
\end{proof}

From this theorem, we write the central thesis of the intuition we'll use in later sections. This can be explained as a commutative diagram. There needs one more piece though, which can be found by introducing the space,

\begin{align*}
\mathbb{E}_\theta = &\{F\,\text{is holomorphic for}\,\Re(z) > -1\,\,\text{s.t.}\\
&\,\,\limsup_{n\to\infty}\left|F(z+n) \right |^{1/n} =0\\
&\,\,|\Gamma(z)F(-z)|\le C_\delta e^{\delta-\theta|\Im(z)|}\\
&\text{for all }\delta>0\,\,\text{where}\,C_\delta\in \mathbb{R}^+\,\text{for}\,\pi/2 \le |\arg(z)| < \pi\}\\
\end{align*}

Then the commutative diagrams are,

\begin{center}
\begin{tikzcd}
\mathbb{S}_\theta \arrow[rr, "\frac{d}{dw}\cdot"] \arrow[dd, "\frac{d^z}{dw^z}\Big{|}_{w=0} \cdot"] &  & \mathbb{S}_\theta \arrow[dd, "\frac{d^z}{dw^z}\Big{|}_{w=0} \cdot"] \\
                                                                                       &  &                                                                 \\
\mathbb{E}_\theta \arrow[rr, "z \mapsto z+1"]                                                       &  & \mathbb{E}_\theta                                                         
\end{tikzcd}
\end{center}

And,

\begin{center}
\begin{tikzcd}
\mathbb{E}_\theta \arrow[dd, "\sum_{n=0}^\infty F(n)\frac{w^n}{n!}"] \arrow[rr, "z \mapsto z+\mu"] &  & \mathbb{E}_\theta \arrow[dd, "\sum_{n=0}^\infty F(n)\frac{w^n}{n!}"] \\
                                                                                      &  &                                                                 \\
\mathbb{S}_\theta \arrow[rr, "\frac{d^\mu}{dw^\mu}"]                                               &  & \mathbb{S}_\theta                                  
\end{tikzcd}
\end{center}

Where these are isomorphic relationships. The only thing left to really prove is that,

\[
f \in \mathbb{S}_\theta \Leftrightarrow \frac{d^z}{dw^z}\Big{|}_{w=0} f \in \mathbb{E}_\theta\\
\]

This is solved by looking at contour integrals. We write our first real non-trivial theorem, which isn't referenced elsewhere.

\begin{theorem}\label{thmDIFFBND}
Let $f \in \mathbb{S}_\theta$, then,

\[
\frac{d^z}{dw^z}\Big{|}_{w=0}f(w) = F(z) \in \mathbb{E}_\theta\\
\]

And conversely.
\end{theorem}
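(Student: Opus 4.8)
The plan is to prove the two implications separately, using the integral representations already in hand. For the forward direction ($f \in \mathbb{S}_\theta \Rightarrow F \in \mathbb{E}_\theta$), start from the analytic continuation furnished by Theorem~\ref{thmEUL}, written along a fixed arc $\gamma \subset S_\theta$. This gives $\Gamma(z) F(z)$ as the sum of an entire series in $z$ with the explicit poles at $z = -n$ cancelled by $1/\Gamma(z)$, plus the tail integral $\int_{\gamma[1,\infty)} f(-y) y^{z-1}\,dy$. To bound $|\Gamma(-z)F(z)|$ for $\pi/2 \le |\arg(z)| < \pi$, I would rotate the contour: for such $z$, choose the arc $\gamma$ whose direction is adapted to $z$ (the freedom in choosing $\gamma$ inside $S_\theta$ is exactly what the definition of $\mathbb{S}_\theta$ buys us), so that $y^{z-1}$ does not blow up along the tail. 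The absolute convergence hypothesis $\int_\gamma |f(y)|\,|y|^{\sigma-1}\,dy < \infty$ then controls the tail uniformly, and the $\delta$ in the exponent $e^{(\delta-\theta)|\Im(z)|}$ appears precisely as the loss incurred when the available arc direction differs from the ideal one by an angle approaching $\theta$. Combining this with Stirling's Theorem to convert between $|\Gamma(z)|$ and $|\Gamma(-z)|$ (the reflection formula $\Gamma(z)\Gamma(1-z) = \pi/\sin \pi z$ turns the product into something with controlled exponential type $e^{-\pi|\Im(z)|}$ times elementary factors) yields the required bound. The summability condition $\limsup_n |F(n)|^{1/n} = 0$ follows because $F(n) = f^{(n)}(0)/n!$ up to the normalization, and $f \in \mathbb{S}_\theta$ forces $f$ to be of exponential type zero in the relevant sense — its Taylor coefficients decay superexponentially since $f$ is integrable against $y^{z-1}$ along unbounded arcs.

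For the converse ($F \in \mathbb{E}_\theta \Rightarrow f \in \mathbb{S}_\theta$), I would run the Mellin–Barnes argument of Theorem~\ref{thmRAM} in reverse. Given $F \in \mathbb{E}_\theta$, the condition $\limsup_n |F(n)|^{1/n} = 0$ guarantees that $f(w) = \sum_{n=0}^\infty F(n) w^n/n!$ is entire, and by Ramanujan's Master Theorem (whose hypotheses are built into the definition of $\mathbb{E}_\theta$) we recover $\frac{d^z}{dw^z}|_{w=0} f = F$. What remains is to show $f \in \mathbb{S}_\theta$, i.e. that $f$ is integrable against $|y|^{\sigma-1}$ along every arc in $S_\theta$ from $0$ to $\infty$. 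For this I would use the contour integral $f(w) = \frac{1}{2\pi i}\int_{\sigma - i\infty}^{\sigma + i\infty} \Gamma(s) F(-s)(-w)^{-s}\,ds$ from the proof of Theorem~\ref{thmRAM}: the bound on $|\Gamma(s)F(-s)|$ in the strip, combined with Stirling, shows this integral converges and, crucially, that $f(w)$ decays like a negative power of $|w|$ as $w \to \infty$ inside $S_\theta$ — one slides the contour to the right, picking up the decay rate from the real part $\sigma$ of the line of integration, and the angular aperture $\theta$ is exactly the sector in which $(-w)^{-s}$ keeps the integral absolutely convergent given the $e^{(\delta - \theta)|\Im(s)|}$ growth allowance. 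That polynomial decay at infinity, together with analyticity (hence local boundedness near $0$), gives the integrability defining $\mathbb{S}_\theta$.

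The main obstacle, I expect, is the careful bookkeeping of the angular parameter $\theta$ and the auxiliary $\delta > 0$ in both directions: one has to verify that the arc-rotation in the forward direction loses at most $\delta$ in the exponent for every $\delta$, and symmetrically that the contour-shift in the converse exploits the full sector $S_\theta$ without requiring more decay than $\mathbb{E}_\theta$ provides. Matching these two ``budgets'' exactly — so that the correspondence is a genuine bijection and not merely a pair of one-sided inclusions with a gap — is where the real work lies; the convergence estimates themselves are routine applications of Stirling once the contours are correctly chosen. A secondary technical point is justifying the interchange of sum and integral in Theorem~\ref{thmEUL}'s series representation when $z$ is complex with large imaginary part, which again comes down to the superexponential decay of the $f^{(n)}(0)$ forced by membership in $\mathbb{S}_\theta$.
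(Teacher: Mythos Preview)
Your proposal is correct and follows the same overall strategy as the paper: contour rotation for the forward direction, the Mellin--Barnes inverse for the converse. The paper's execution of the forward implication is more direct, however. It does not pass through the Euler split of Theorem~\ref{thmEUL} at all; instead it writes the integral for $F(-z)$ along the rotated ray $\gamma = [0, e^{i(\theta-\delta)}\infty)$, substitutes $y = e^{i(\theta-\delta)}x$, and obtains
\[
\Gamma(z)F(-z) = e^{i(\theta-\delta)z}\int_0^\infty f(-e^{i(\theta-\delta)}x)\,x^{z-1}\,dx.
\]
The $\Gamma(z)$ in the definition of $\mathbb{E}_\theta$ cancels the $1/\Gamma(z)$ in the differintegral immediately, so no reflection formula is needed and the bound $|e^{i(\theta-\delta)z}| \le C e^{(\delta-\theta)|\Im(z)|}$ drops out in one line. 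Your route through the Euler decomposition and the reflection identity is a valid detour that costs extra bookkeeping without buying anything. For the converse the paper is actually terser than your proposal---it writes down the inverse Mellin integral, notes it converges for $w \in S_\theta$, and invokes Mellin inversion---so your discussion of the decay of $f$ inside $S_\theta$ via contour shifting fills in detail the paper leaves implicit.
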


\begin{proof}
The integral,

\[
F(-z) = \frac{1}{\Gamma(z)}\int_\gamma f(-y)y^{z-1}\,dy\\
\]

Converges for the arc $\gamma = [0,e^{i(\theta-\delta)}\infty)$. Subbing this in we get,

\[
F(-z) = \frac{e^{i(\theta-\delta)z}}{\Gamma(z)} \int_0^\infty f(-e^{i(\theta-\delta)}x)x^{z-1}\,dx\\
\]

The integral is bounded as,

\[
|F(-z)| \le \left |\frac{e^{i(\theta-\delta)z}}{\Gamma(z)}\right|\int_0^\infty |f(-e^{i(\theta-\delta)}x)|\,dx\\
\]

In the upper half plane (resp. lower half plane) the term $\left |e^{i(\theta-\delta) z}\right| \le Ce^{(\delta-\theta)|\Im(z)|}$. This relationship can be derived throughout by differentiating $f$, and shifting $F$. Which says $F \in \mathbb{E}_\theta$. Or we can observe that,

\[
\Gamma(-z)F(z) = \sum_{n=0}^\infty f^{(n)}(w)\frac{(-1)^n}{n!(n-z)} + \int_1^\infty f(w-y)y^{-z-1}\,dy \to 0\,\, \text{as}\,\,\Re(z) \to \infty\\
\]

If $F$ in $\mathbb{E}_\theta$, then using similar asymptotics,

\[
f(w) = \frac{1}{2 \pi i} \int_{\sigma - i\infty}^{\sigma + i \infty} \Gamma(z)F(-z)(-w)^{-z}\,dw\\
\]

Is valid for $w \in S_\theta$. And by Mellin's Inversion Theorem, we're done.
\end{proof}

For this section we can introduce the central theorem, which is just an isomorphism. We can aptly call this The Differintegral Isomorphism.

\begin{theorem}[The Differintegral Isomorphism]\label{thmDIFFISO}

For all $f \in \mathbb{S}_\theta$ there exists a unique $F \in \mathbb{E}_\theta$. And for all $F \in \mathbb{E}_\theta$ there exists a unique $f \in \mathbb{S}_\theta$. Upon which,

\[
f(w) = \sum_{n=0}^\infty F(n)\frac{w^n}{n!}\\
\]

And,

\[
F(z) = \frac{d^z}{dw^z}f(w)\Big|_{w=0}
\]

\end{theorem}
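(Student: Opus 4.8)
The statement is essentially a repackaging of Theorems~\ref{thmEUL}, \ref{thmRAM} and \ref{thmDIFFBND} as a single bijection, so the plan is to name the two maps, cite the earlier results for their well-definedness, and then verify that they are mutually inverse. Write $\Phi(f)(z) = \frac{d^z}{dw^z}\big|_{w=0} f(w)$ and $\Psi(F)(w) = \sum_{n=0}^\infty F(n)\frac{w^n}{n!}$. Theorem~\ref{thmDIFFBND} already supplies one direction: it gives $\Phi:\mathbb{S}_\theta \to \mathbb{E}_\theta$, and — through its converse half, where an $F\in\mathbb{E}_\theta$ is recovered from the Mellin--Barnes integral $\frac{1}{2\pi i}\int_{\sigma-i\infty}^{\sigma+i\infty}\Gamma(z)F(-z)(-w)^{-z}\,dz$ — that this integral represents an element of $\mathbb{S}_\theta$. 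Closing the contour to the left and summing the residues at $z=-n$ (exactly the computation in the proof of Theorem~\ref{thmRAM}) identifies that element with $\Psi(F)$. Hence $\Psi$ is a well-defined map $\mathbb{E}_\theta \to \mathbb{S}_\theta$ and $\Phi\circ\Psi = \mathrm{id}_{\mathbb{E}_\theta}$.

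For the opposite composition, I would argue via injectivity of $\Phi$. Given $f\in\mathbb{S}_\theta$, put $F=\Phi(f)\in\mathbb{E}_\theta$; by the previous paragraph $\Phi(\Psi(F)) = F = \Phi(f)$, so it suffices to know $\Phi$ is injective on $\mathbb{S}_\theta$. This too is contained in the converse half of Theorem~\ref{thmDIFFBND}: an $f\in\mathbb{S}_\theta$ is recovered from $\Phi(f)$ by Mellin inversion, so $\Phi(f)=\Phi(g)$ forces $f=g$, and therefore $\Psi(\Phi(f)) = \Psi(F) = f$, i.e. $\Psi\circ\Phi = \mathrm{id}_{\mathbb{S}_\theta}$. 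Uniqueness in both halves of the statement is then automatic, since $\Phi$ and $\Psi$ are mutually inverse bijections. (Alternatively, uniqueness of $F$ for a given $f$ follows from a Carlson-type theorem: two members of $\mathbb{E}_\theta$ agreeing on the non-negative integers coincide, because the bound $|\Gamma(z)F(-z)|\le C_\delta e^{\delta-\theta|\Im(z)|}$ keeps the exponential type strictly below $\pi$; but the bijectivity route makes this unnecessary.)

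The one genuinely non-routine point, and the step I expect to require the most care, is confirming that $\Psi(F)$ actually lands in $\mathbb{S}_\theta$ rather than merely among entire functions. Convergence of $\sum F(n)w^n/n!$ to an entire function is immediate from $\limsup_n |F(n)|^{1/n}=0$; what must be checked is the arc-integrability $\int_\gamma |\Psi(F)(y)|\,dy < \infty$ for arcs $\gamma\subset S_\theta$. This is where the Mellin--Barnes representation and Stirling's Theorem are used: the $\mathbb{E}_\theta$ bound on $|\Gamma(z)F(-z)|$, together with the $e^{-\pi|\Im(z)|/2}$ decay of $|\Gamma|$ along vertical lines, forces $\Psi(F)(w)$ to decay along the rays of $S_\theta$ fast enough for the line integral to converge — essentially the same estimate that makes the contour in the proof of Theorem~\ref{thmRAM} admissible. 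In writing this out one also has to reconcile the slightly different phrasings of the exponential bounds in the hypotheses of Theorems~\ref{thmRAM} and \ref{thmDIFFBND} and in the definition of $\mathbb{E}_\theta$ (a harmless $\delta$-bookkeeping matter), and to record the compatibility of the differintegral with ordinary differentiation at non-negative integers, so that $F(n)=\Phi(f)(n)=f^{(n)}(0)$ and the Taylor expansion of $f$ is literally $\Psi(\Phi(f))$.
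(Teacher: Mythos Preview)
Your proposal is correct and matches the paper's approach: the paper gives no separate proof for this theorem at all, presenting it immediately after Theorem~\ref{thmDIFFBND} as ``just an isomorphism'' that packages the preceding results. Your explicit naming of the two maps and verification that they are mutual inverses via Theorems~\ref{thmEUL}, \ref{thmRAM}, and \ref{thmDIFFBND} is precisely the intended reading, and in fact supplies more detail than the paper itself does.
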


This theorem will be the central focus of the rest of this paper. It means, in plain english; for every differintegrable entire function $f$, there is a well behaved differintegral $F$; and vice versa. This will make more sense as we progress.

\section{Schr\"{o}der's equation in the nicest possible conditions}

The purpose of this section is to describe iterates of a function $\phi: \mathcal{G} \to \mathcal{G}$. Specifically where there is a unique attracting fixed point $\xi_0 \in \mathcal{G}$ where $\phi(\xi_0) = \xi_0$. And additionally that the multiplier of said fixed point is $0 < \phi'(\xi_0) < 1$. Then, we want to construct a function $\phi^{\circ z}(\xi)$. This is doable, and has been done to its exhaustion.

We will briefly follow John Milnor's treatment of local fixed point theory (particularly geometrically attracting fixed points) from \textit{Dynamics in One Complex Variable} \cite{Mil}. We will not need much from this book; but we'll borrow its terminology, and some of its results.

The knick, the knack, and all that slaps back; is that we can write the usual iteration methods using integral transforms. In order to do this; we have to keep the derivative of $\phi$ non-zero; but all else falls in line. This is the first isomorphism we'll explain; which is Schroder's function. There exists a function,

\[
\Psi(\phi(\xi)) = \phi'(\xi_0) \Psi(\xi)\\
\]

Where $\Psi$ is holomorphic in a neighborhood of $\xi_0$ at least; and sends $\Psi(\xi_0) = 0$. Call a neighborhood $\mathcal{U}$ in which we define the iteration,

\[
\phi^{\circ z}(\xi) = \Psi^{-1}(\phi'(\xi_0)^z \Psi(\xi)) : \mathbb{C}_{\Re(z) > 0} \times \mathcal{U} \to \mathcal{U}\\
\]

Now, since this function is periodic in $z$ with period $2\pi i / \log(\phi'(\xi_0))$; we know this function satisfies the bound,

\[
||\phi^{\circ z}(\xi)||_{\Re(z) > 0, \mathcal{U}} \le M\\
\]

For some $M$. This implies our function $\phi^{\circ z}(\xi) \in \mathbb{E}_\theta$ for all $\theta \le \pi/2$. (Recalling that the Gamma function has decay like $e^{-\pi/2|\Im(z)|}$.) Then, we have a function,

\[
\vartheta(w,\xi) = \sum_{n=0}^\infty \phi^{\circ n+1}(\xi)\frac{w^n}{n!}\\
\]

Which satisfies,

\[
\phi^{\circ z}(\xi) = \frac{d^{z-1}}{dw^{z-1}}\Big{|}_{w=0} \vartheta(w,\xi)\\
\]

And the functional equation,

\[
\vartheta(w,\phi(\xi)) = \frac{d}{dw}\vartheta(w,\xi)\\
\]

The importance of this being that $\vartheta$ makes no mention of the Schr\"{o}der function, or its inverse. As that can be exhausting to compute; this is very helpful. All we need to construct the complex iteration is the natural iterates of $\phi$.

If we call $\mathcal{A}$ the basin of attraction about the fixed point $\xi_0$; which is,

\[
\xi \in \mathcal{A} \,\Rightarrow\,\lim_{n\to\infty}\phi^{\circ n}(\xi) = \xi_0\\
\]

The surely the function,

\[
\vartheta(w,\xi) : \mathbb{C} \times \mathcal{A} \to \mathbb{C}\\
\]

We're going to restrict this domain a bit; and take the maximal connected component of $\mathcal{A}$ which contains $\xi_0$. This is known as the immediate basin of attraction. We'll denote this $\mathcal{A}_0$.

Then the central focus of this section is to show that,

\[
\phi^{\circ z}(\xi) = \frac{d^{z-1}}{dw^{z-1}}\Big{|}_{w=0}\vartheta(w,\xi) : \mathbb{C}_{\Re(z) > 0} \times \mathcal{A}_0 \to \mathcal{A}_0\\
\]

Which is why we've made the requirement that $\phi'(\xi) \neq 0$; that the derivative is non-singular. There are two ways of proving this result, and both are rather technical. We can talk about Schr\"{o}der's function more; or we can restrict our conversation to integral transforms.

However, both display the same central idea. That is, to pull back our solution with $\phi^{-1}$. Essentially this is done with the implicit function theorem,

\[
\phi(y) = \phi^{\circ z+1}(\xi)\\
\]

Then,

\[
y = \phi^{\circ z}(\xi)\\
\]

These values always exist, and $\phi$ is non singular; and so this constructs a holomorphic function on $\mathcal{A}_0$. This, again, will be periodic, and therefore bounded and within $\mathbb{E}_\theta$. We can write this as a theorem.

\begin{theorem}[Fractional Calculus Representation Theorem]\label{thmREPFRC}
Let $\mathcal{G} \subseteq \mathbb{C}$ be a domain. Assume that $\phi : \mathcal{G} \to \mathcal{G}$ and for some $\xi_0 \in \mathcal{G}$ we have $\phi(\xi_0) = \xi_0$ and $0 < \phi'(\xi_0) < 1$. Additionally, assume that $\phi'(\xi) \neq 0$. Then on the immediate basin of attraction $\mathcal{A}_0$, there exists a fractional iteration,

\[
\phi^{\circ z}(\xi) : \mathbb{C}_{\Re(z) > 0} \times \mathcal{A}_0 \to \mathcal{A}_0\\
\]

Which can be represented as,

\[
\phi^{\circ z}(\xi) = \frac{d^{z-1}}{dw^{z-1}}\Big{|}_{w=0}\vartheta(w,\xi)\\
\]

For,

\[
\vartheta(w,\xi) = \sum_{n=0}^\infty \phi^{\circ n+1}(\xi) \frac{w^n}{n!}\\
\]
\end{theorem}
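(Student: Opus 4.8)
The plan is to verify the three assertions in sequence: first that $\vartheta(w,\xi)$ is, for each fixed $\xi \in \mathcal{A}_0$, an element of $\mathbb{S}_\theta$ (so that the differintegral on the right-hand side makes sense and lands in $\mathbb{E}_\theta$ by Theorem~\ref{thmDIFFISO}); second that the resulting differintegral reproduces $\phi^{\circ z}(\xi)$; and third that the iteration extends holomorphically from the small Schr\"{o}der neighborhood $\mathcal{U}$ to all of $\mathcal{A}_0$ while keeping its values inside $\mathcal{A}_0$. For the first step I would start on a neighborhood $\mathcal{U}$ of $\xi_0$ where the Schr\"{o}der function $\Psi$ is a biholomorphism, write $\phi^{\circ z}(\xi) = \Psi^{-1}(\phi'(\xi_0)^z\Psi(\xi))$, and observe that as $\Re(z) \to \infty$ the point $\phi'(\xi_0)^z\Psi(\xi) \to 0$, hence $\phi^{\circ z}(\xi) \to \xi_0$; together with $2\pi i/\log\phi'(\xi_0)$-periodicity in $z$ this gives the uniform bound $\|\phi^{\circ z}(\xi)\|_{\Re(z)>0} \le M$ claimed in the text. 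The $n \mapsto n+1$ shift then shows $\vartheta(w,\xi) = \sum_{n\ge 0}\phi^{\circ(n+1)}(\xi)w^n/n!$ is entire in $w$ of exponential type, and that $\limsup_n |\phi^{\circ(n+1)}(\xi)|^{1/n} = 0$ since the coefficients are bounded; the integrability along arcs $\gamma \subset S_\theta$ needed for $\mathbb{S}_\theta$ follows from the decay of $\phi'(\xi_0)^z$ away from $z \in i\mathbb{R}$, exactly as in the proof of Theorem~\ref{thmDIFFBND}.

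For the second step, I would apply the Differintegral Isomorphism directly: since $F(z) := \phi^{\circ z}(\xi)$ lies in $\mathbb{E}_\theta$ and its Taylor-coefficient series in the first diagram is precisely $\vartheta(w,\xi)$ (shifted by one, which is why we take $d^{z-1}/dw^{z-1}$ rather than $d^z/dw^z$), Theorem~\ref{thmDIFFISO} gives $\frac{d^{z-1}}{dw^{z-1}}\big|_{w=0}\vartheta(w,\xi) = \phi^{\circ (z-1)+1}(\xi) = \phi^{\circ z}(\xi)$. The functional equation $\vartheta(w,\phi(\xi)) = \frac{d}{dw}\vartheta(w,\xi)$ is then read off from the first commutative diagram (differentiation in $w$ corresponds to $z \mapsto z+1$ on the $\mathbb{E}_\theta$ side), and it encodes $\phi^{\circ z}(\phi(\xi)) = \phi^{\circ(z+1)}(\xi)$.

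The main obstacle is the third step: promoting the identity from $\mathcal{U}$ to the immediate basin $\mathcal{A}_0$ and, crucially, showing the values stay in $\mathcal{A}_0$. Here I would argue by the pull-back sketched in the text: given $\xi \in \mathcal{A}_0$, choose $N$ large enough that $\phi^{\circ N}(\xi) \in \mathcal{U}$, define $\phi^{\circ z}(\xi)$ for $\Re(z) > 0$ by first iterating into $\mathcal{U}$, applying the Schr\"{o}der formula there to get $\phi^{\circ(z+N)}(\xi) \in \mathcal{U} \subseteq \mathcal{A}_0$, and then pulling back $N$ times along branches of $\phi^{-1}$ — each such branch exists and is holomorphic near the relevant point because $\phi'(\xi) \neq 0$ everywhere, and each stays in $\mathcal{A}_0$ because $\mathcal{A}_0$ is forward- and (along the correct branch) backward-invariant. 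Well-definedness (independence of $N$ and of the branch choices) follows from the functional equation and analytic continuation along $\mathcal{A}_0$, which is connected; the uniform bound, hence membership of $\phi^{\circ z}(\xi,\cdot)$ in $\mathbb{E}_\theta$ for fixed $\xi$, persists because $\phi^{\circ z}(\xi)$ is $\phi^{\circ N}$ composed after a bounded periodic family, and $\phi^{\circ N}$ maps $\mathcal{A}_0$ into a relatively compact subset. Holomorphy jointly in $(z,\xi)$ then follows from Hartogs together with separate holomorphy, and the representation formula, being an identity of holomorphic functions agreeing on the open set $\mathcal{U}$, extends to all of $\mathcal{A}_0$ by the identity theorem.
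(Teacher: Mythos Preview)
Your proposal is correct and follows essentially the same route as the paper's discussion leading up to the theorem statement: construct $\phi^{\circ z}$ locally via the Schr\"{o}der coordinate, use the imaginary period in $z$ to get a uniform bound and hence membership in $\mathbb{E}_\theta$, invoke the Differintegral Isomorphism (Theorem~\ref{thmDIFFISO}) for the $\vartheta$-representation, and then extend from the Schr\"{o}der neighborhood $\mathcal{U}$ to all of $\mathcal{A}_0$ by pulling back along local branches of $\phi^{-1}$, which exist because $\phi'\neq 0$. You supply more detail than the paper does---the Hartogs step for joint holomorphy, the well-definedness checks for the pull-back, the identity-theorem extension---but the architecture is identical. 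One harmless slip: bounded coefficients give $\limsup_n|\phi^{\circ(n+1)}(\xi)|^{1/n}\le 1$, not $=0$; this does not affect the entirety of $\vartheta$ or anything downstream.
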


\section{The Tetration Function}

This section is intended to create a base-step in our inductive proof to come. This is where we start to construct a non-trivial result. We suggest the reader google the term `tetration' for a better understanding of what it is. There exists a scant amount of literature on the subject; and of this, we assume the reader is somewhat familiar with the idea. We want to construct a tetration function,

\[
\alpha \up \up z\\
\]

For $1 < \alpha < \eta = e^{1/e}$ and for $\Re(z) > 0$; which maps to $\Re(z) > 0$. The only difficult part of this section is ensuring that this tetration maps to the right half plane. Start by considering the function,

\[
\phi(\xi) = \alpha^\xi\\
\]

Which has a fixed point at $1 < \omega < e$ where $\alpha = \omega^{1/\omega}$. The multiplier of this fixed point is,

\[
\phi'(\omega) = \log(\omega)\\
\]

Which is in the interval $(0,1)$. Additionally, this function has a non-zero derivative. Therefore, it satisfies all the requirements of The Fractional Calculus Representation Theorem \ref{thmREPFRC}. So in the immediate basin $\mathcal{A}_0$ about $\omega$ we have,

\[
\phi^{\circ z}(\xi) : \mathbb{C}_{\Re(z) > 0} \times \mathcal{A}_0 \to \mathcal{A}_0\\
\]

It's not hard to notice that $\phi^{\circ n}(1) = \alpha^{\displaystyle \alpha^{...\alpha}} \to \omega$. This result also dates back to Euler. So if we define,

\[
\vartheta(w)= \sum_{n=0}^\infty \phi^{\circ n+1}(1) \frac{w^n}{n!} = \sum_{n=0}^\infty \left( \alpha^{\displaystyle\alpha^{...n+1\,\text{times}...\alpha}}\right)\frac{w^n}{n!}\\
\]

Then,

\[
\alpha \up \up z = \frac{d^{z-1}}{dw^{z-1}}\Big{|}_{w=0} \vartheta(w)\\
\]

Now, in addition to this; the operator $z \mapsto z+1$ is a contraction mapping. And it contracts to the fixed point at $z=\omega$. In order to understand this, we have to know that $\alpha \up \up z$ is injective.

\begin{lemma}[The Injectivity Lemma]\label{thmINJ}
The function $\phi^{\circ z}(\xi)$ from Theorem \ref{thmREPFRC} is injective in $z$ on the strip $0 \le \Im(z) < 2\pi/|\log(\phi'(\xi_0))|$.
\end{lemma}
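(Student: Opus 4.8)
The plan is to reduce injectivity of $\phi^{\circ z}(\xi)$ in $z$ to injectivity of the Schröder function $\Psi$ on the immediate basin, together with the fact that the exponential map $\lambda^z$ (with $\lambda = \phi'(\xi_0)$) is injective on a strip of height equal to one period $2\pi/|\log\lambda|$. Recall from Section~3 that $\phi^{\circ z}(\xi) = \Psi^{-1}(\lambda^z \Psi(\xi))$. Suppose $\phi^{\circ z_1}(\xi) = \phi^{\circ z_2}(\xi)$ for two points $z_1, z_2$ in the strip $0 \le \Im(z) < 2\pi/|\log\lambda|$. Applying $\Psi$ (valid wherever $\Psi$ is defined and injective near $\xi_0$) gives $\lambda^{z_1}\Psi(\xi) = \lambda^{z_2}\Psi(\xi)$; since $\xi \neq \xi_0$ we have $\Psi(\xi) \neq 0$ (as $\Psi$ vanishes only at the fixed point, being a local conjugacy), so $\lambda^{z_1} = \lambda^{z_2}$, i.e. $(z_1 - z_2)\log\lambda \in 2\pi i \mathbb{Z}$. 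Writing $\log\lambda$ as a negative real (since $0<\lambda<1$), this forces $z_1 - z_2 \in (2\pi i/|\log\lambda|)\mathbb{Z}$, and the strip condition then pins $z_1 = z_2$.

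The key technical point to nail down is that this argument, which is immediate when $\xi$ lies in the small neighborhood $\mathcal{U}$ where $\Psi$ is a biholomorphism, extends to \emph{all} of $\mathcal{A}_0$. I would handle this by propagation along orbits: for arbitrary $\xi \in \mathcal{A}_0$, there is an $N$ with $\phi^{\circ N}(\xi) \in \mathcal{U}$, and the functional equation $\phi^{\circ z}(\phi^{\circ N}(\xi)) = \phi^{\circ(z+N)}(\xi)$ lets me transfer an equality $\phi^{\circ z_1}(\xi) = \phi^{\circ z_2}(\xi)$ to an equality $\phi^{\circ(z_1+N)}(\eta) = \phi^{\circ(z_2+N)}(\eta)$ with $\eta = \phi^{\circ N}(\xi) \in \mathcal{U}$. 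The shifted arguments $z_j + N$ still differ by an element of the period lattice (the shift by $N$ is the same for both), so the strip-based conclusion $z_1 = z_2$ survives; one must just check that shifting by a real integer $N$ does not move the difference out of the relevant coset, which is clear since the period is purely imaginary.

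The main obstacle I anticipate is not the algebra above but verifying that $\Psi$ extends to a well-defined (single-valued, though not necessarily injective globally) holomorphic function on the immediate basin $\mathcal{A}_0$ via the formula $\Psi(\xi) = \lambda^{-n}\Psi(\phi^{\circ n}(\xi))$ for $n$ large, and more importantly that $\Psi(\xi) = 0$ \emph{only} at $\xi = \xi_0$ on $\mathcal{A}_0$ — i.e.\ that the conjugacy does not acquire spurious zeros once pushed out to the full immediate basin. This follows from the standard theory (Milnor, Koenigs): $\Psi$ is a local biholomorphism at $\xi_0$, and since $\phi$ is nonsingular on $\mathcal{A}_0$, each $\phi^{\circ n}$ is a local biholomorphism, so $\Psi$ remains locally injective everywhere on $\mathcal{A}_0$ and in particular its zero set is discrete and $\phi$-invariant; the only $\phi$-fixed point in $\mathcal{A}_0$ being $\xi_0$, the zero set is exactly $\{\xi_0\}$. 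Once that structural fact is in hand, the injectivity statement drops out from the elementary period computation. I would also remark that the half-open strip $0 \le \Im(z) < 2\pi/|\log\lambda|$ is sharp: $\phi^{\circ z}$ is exactly $2\pi i/\log\lambda$-periodic, so no larger strip can work, which both confirms the statement and signals that no slack is available in the argument.
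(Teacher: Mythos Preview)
Your proof is correct, but it takes a different route from the paper's. You work directly with the Schr\"oder conjugacy: from $\phi^{\circ z}(\xi)=\Psi^{-1}(\lambda^{z}\Psi(\xi))$ you apply $\Psi$, cancel $\Psi(\xi)\neq 0$, and reduce to the injectivity of $z\mapsto\lambda^{z}$ on a strip of one period; then you handle general $\xi\in\mathcal{A}_0$ by iterating into the small Koenigs neighborhood. The paper instead avoids $\Psi$ entirely and uses its own machinery: from $\phi^{\circ z_1}(\xi)=\phi^{\circ z_2}(\xi)$ it composes with $\phi^{\circ n}$ to get equality at all shifts $z_j+n$, and then invokes the Differintegral Isomorphism (Theorem~\ref{thmDIFFISO}) --- two elements of $\mathbb{E}_\theta$ that agree on $\mathbb{N}$ must coincide --- to upgrade this to $\phi^{\circ z_1+z}(\xi)=\phi^{\circ z_2+z}(\xi)$ for all $z$, whence $z_1-z_2$ is a period and the strip forces it to vanish. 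Your approach is the classical dynamics argument and is arguably more self-contained for a reader who knows Koenigs linearization but not the paper's transform theory; the price is the structural digression you correctly flagged (extending $\Psi$ to $\mathcal{A}_0$ and controlling its zero set). The paper's approach buys internal coherence --- it showcases exactly the uniqueness principle the earlier sections were built to deliver --- and sidesteps any discussion of the global behavior of $\Psi$, at the cost of depending on the $\mathbb{E}_\theta$ framework. Both arguments land on the same endgame: $z_1-z_2$ lies in the period lattice $\tfrac{2\pi i}{\log\lambda}\mathbb{Z}$, and the half-open strip kills it.
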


\begin{proof}
Suppose that,

\[
\phi^{\circ z_1}(\xi) = \phi^{\circ z_2}(\xi)\\
\]

Then,

\[
\phi^{\circ z_1 + n} = \phi^{\circ n}(\phi^{\circ z_1}) = \phi^{\circ n}(\phi^{\circ z_2}) = \phi^{\circ z_2 +n}(\xi)\\
\]

Therefore,

\[
\phi^{\circ z_1 + z}(\xi) = \phi^{\circ z_2 + z}(\xi)\\
\]

By expanding them in their integral representation. Which is, if $F,G \in \mathbb{E}_\theta$ and $F\Big{|}_{\mathbb{N}} = G \Big{|}_{\mathbb{N}}$; then $F=G$. Now, $z_1 - z_2$ must be a period of $\phi^{\circ z}$; but this is impossible in the strip we chose.
\end{proof}

With this, we can confidently say the contraction lemma.

\begin{lemma}[The Contraction Lemma]\label{lmaCNTRC}
The function $\phi^{\circ z}(\xi)$ from Theorem \ref{thmREPFRC} satisfies,

\[
|\phi^{\circ z}(\xi) - \xi_0| \le |\xi - \xi_0|\\
\]
\end{lemma}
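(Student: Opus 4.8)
\textbf{Proof proposal for the Contraction Lemma.}

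The plan is to exploit the Schr\"{o}der conjugacy rather than the integral representation, since $|\phi^{\circ z}(\xi) - \xi_0|$ is most naturally controlled on the model side. Recall $\phi^{\circ z}(\xi) = \Psi^{-1}(\lambda^z \Psi(\xi))$ where $\lambda = \phi'(\xi_0) \in (0,1)$ and $\Psi(\xi_0) = 0$. First I would fix $\xi \in \mathcal{A}_0$ and consider the function $g(z) = \phi^{\circ z}(\xi) - \xi_0$ on the half-plane $\Re(z) > 0$. As observed already in the text, $g$ is holomorphic there, it is periodic with period $2\pi i / \log \lambda$, and it is bounded by some constant $M$. The natural iterates satisfy $\phi^{\circ n}(\xi) \to \xi_0$ as $n \to \infty$ (since $\xi$ lies in the basin), so $g(n) \to 0$. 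The key analytic input is then a Phragm\'{e}n--Lindel\"{o}f / maximum-modulus argument: a bounded holomorphic function on a half-plane, periodic in the imaginary direction, that tends to $0$ along the real axis must in fact decay, and one wants to leverage this to pin down the modulus on vertical lines.

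The cleaner route, which I expect to be the one intended, is to work directly on the model. Write $u = \Psi(\xi)$, so that $\phi^{\circ z}(\xi) - \xi_0 = \Psi^{-1}(\lambda^z u) - \Psi^{-1}(0)$. For $\Re(z) \ge 0$ we have $|\lambda^z| = \lambda^{\Re(z)} \le 1$, so $\lambda^z u$ ranges over a subset of the disk $\{|v| \le |u|\}$, which for $\xi \in \mathcal{A}_0$ sits inside the linearizing domain. Thus it suffices to show that $\xi \mapsto \Psi(\xi)$ does not decrease distance to $\xi_0$ when pulled back — equivalently, that the natural iterates $\phi^{\circ n}(\xi)$ satisfy $|\phi^{\circ n}(\xi) - \xi_0|$ monotonically non-increasing, and then interpolate. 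Concretely: for integer $n$ the bound $|\phi^{\circ n}(\xi) - \xi_0| \le |\xi - \xi_0|$ follows because $\phi$ maps the (connected, simply-to-be-arranged) sublevel set $\{|\eta - \xi_0| \le r\} \cap \mathcal{A}_0$ into itself for $r$ small — a consequence of $\lambda \in (0,1)$ and continuity of $\phi$ near $\xi_0$ — and one enlarges $r$ up to $|\xi-\xi_0|$ along the flow. For non-integer $z$ with $\Re(z) > 0$ one then applies the maximum principle to $g(z) = \phi^{\circ z}(\xi) - \xi_0$ on a period-strip rectangle: $g$ is bounded, $2\pi i/\log\lambda$-periodic, $|g| \le |\xi - \xi_0|$ on the line $\Re(z) = 0$ (by the integer case plus periodicity and a density/continuity argument using the Injectivity Lemma \ref{thmINJ} to rule out pathological boundary behaviour), and $g \to 0$ as $\Re(z) \to \infty$; hence $|g(z)| \le |\xi - \xi_0|$ throughout $\Re(z) \ge 0$.

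The main obstacle I anticipate is the boundary control at $\Re(z) = 0$: the integer bound gives the estimate only at the lattice points $z = n + 2\pi i k/\log\lambda$, and I must upgrade this to the whole vertical line before the maximum principle applies. The honest way to do this is to go back to the Schr\"{o}der picture — on the line $\Re(z) = 0$ we have $|\lambda^z| = 1$, so $\phi^{\circ z}(\xi)$ traces out the closed curve $\Psi^{-1}(\{|v| = |u|\})$, and the claim $|\phi^{\circ z}(\xi) - \xi_0| \le |\xi - \xi_0|$ on this curve is precisely the statement that $\Psi^{-1}$ maps the circle of radius $|u|$ into the disk of radius $|\xi - \xi_0|$ about $\xi_0$. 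That in turn is a Schwarz-lemma-type fact about $\Psi^{-1}$ normalized at $0$, provided the relevant disk lies in the image of the linearizing chart — which is where one genuinely uses that $\mathcal{A}_0$ is the \emph{immediate} basin and that $\phi'$ is nowhere zero (so $\Psi$ extends as a biholomorphism as far as needed). I would therefore structure the final proof as: (i) reduce to a Schwarz-lemma estimate on $\Psi^{-1}$ via the substitution $v = \lambda^z \Psi(\xi)$; (ii) verify the disk $\{|v| \le |\Psi(\xi)|\}$ lies in the linearization domain for $\xi \in \mathcal{A}_0$; (iii) conclude.
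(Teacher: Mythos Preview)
Your approach is quite different from the paper's. The paper never touches $\Psi$ or $\Psi^{-1}$ and does not run a Phragm\'en--Lindel\"of argument. Instead it fixes $\xi$, uses the Injectivity Lemma to regard $\zeta = \phi^{\circ z}(\xi)$ as a coordinate on the image domain
\[
\mathcal{U} = \{\phi^{\circ z}(\xi) : \Re(z) > 0\} \subset \mathcal{A}_0,
\]
observes that for each $\mu$ with $\Re(\mu) > 0$ the map $\phi^{\circ \mu}$ sends $\mathcal{U}$ into itself with $\phi^{\circ \mu n}(\zeta) \to \xi_0$, and then asserts in one line that ``this is a contraction on $\mathcal{U}$'', concluding $|\phi^{\circ \mu}(\zeta) - \xi_0| \le |\zeta - \xi_0|$. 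The whole proof is four sentences; the contraction step is stated, not argued, so there is not much to match up with your more detailed outline.

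More importantly, your proposed resolution of the boundary obstacle has a real gap. You reduce the estimate on $\Re(z) = 0$ to the claim that $\Psi^{-1}$ carries the circle $\{|v| = |u|\}$ into the closed disk of radius $|\xi - \xi_0| = |\Psi^{-1}(u) - \xi_0|$ about $\xi_0$, and call this ``a Schwarz-lemma-type fact about $\Psi^{-1}$ normalized at $0$''. But the Schwarz lemma bounds $|\Psi^{-1}(v) - \xi_0|$ in terms of $|v|$ and the radius of a disk containing the \emph{image}; it does not compare $|\Psi^{-1}(v) - \xi_0|$ to $|\Psi^{-1}(u) - \xi_0|$ for another point $u$ on the same circle. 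What you actually need is that the particular point $u = \Psi(\xi)$ happens to maximize $|\Psi^{-1}(\cdot) - \xi_0|$ over $\{|v| = |u|\}$, and for a generic linearizer this is false. For instance, if $\Psi^{-1}(v) = \xi_0 + v - v^2$ near $0$ (which does arise as the inverse Schr\"oder map of some $\phi$ with $\phi'(\xi_0) = \lambda \in (0,1)$), then $|\Psi^{-1}(v) - \xi_0| = |v|\,|1 - v|$ is strictly larger at $v = -r$ than at $v = r$ for small $r > 0$; taking $\xi$ real with $\Psi(\xi) = r$ and $\Re(z)$ small with $\lambda^{z}$ near $-1$ already breaks the inequality you want. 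The same obstruction kills your first (maximum-principle) route, since the needed bound on the line $\Re(z) = 0$ is exactly this circle estimate. The paper's argument sidesteps this by working with the self-map $\phi^{\circ \mu}$ on $\mathcal{U}$ rather than with $\Psi^{-1}$ on a model circle, though it is terse enough that one may question whether the Euclidean conclusion is fully justified there either.
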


\begin{proof}
Since our function is injective, we can define a coordinate,

\[
\zeta = \phi^{\circ z}(\xi)\\
\]

Additionally, there is a domain,

\[
\zeta \in \mathcal{U} = \{\zeta \in \mathcal{A}_0\,|\,\zeta = \phi^{\circ z}(\xi)\,\,\text{for}\,\Re(z)>0\}\\
\]

For any $\Re \mu > 0$ we know that $\lim_{n\to\infty} \phi^{\circ \mu n}(\zeta) \to \xi_0$. But additionally; this is a contraction on $\mathcal{U}$. Therefore, in this coordinate,

\[
|\phi^{\circ \mu}(\zeta) - \xi_0| \le |\zeta - \xi_0|\\
\]

Which is the theorem statement.
\end{proof}

From this, we see the finish line in our sights. It's a little off kilter, but it's there. We must have that,

\[
|\alpha \up \up z - \omega| \le |1-\omega|\\
\]

And the domain,

\[
\mathcal{H} = \{\zeta \in \mathbb{C}\,|\,|\zeta - \omega| \le |1-\omega|\}\subset\mathbb{C}_{\Re(z) > 0}\\
\]

Therefore, we have the base-step of our induction.

\begin{theorem}[Bounded Analytic Tetration Theorem]\label{thmBNDTET}
The function,

\[
\alpha\up\up z : (1,\eta) \times \mathbb{C}_{\Re(z) > 0} \to \mathbb{C}_{\Re(z) > 0}\\
\]
\end{theorem}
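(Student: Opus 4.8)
The plan is to assemble the three ingredients already in hand: the Fractional Calculus Representation Theorem \ref{thmREPFRC}, which gives us the holomorphic iteration $\phi^{\circ z}(\xi) : \mathbb{C}_{\Re(z)>0} \times \mathcal{A}_0 \to \mathcal{A}_0$ for $\phi(\xi)=\alpha^\xi$; the Contraction Lemma \ref{lmaCNTRC}, which bounds $|\phi^{\circ z}(1) - \omega| \le |1-\omega|$; and the elementary geometric fact that the closed disc $\mathcal{H} = \{\zeta : |\zeta-\omega| \le |1-\omega|\}$ sits inside the right half-plane $\mathbb{C}_{\Re(z)>0}$. First I would verify this last containment: since $1 < \omega < e$, the real number $\omega - |1-\omega| = \omega - (\omega-1) = 1 > 0$, so every point of $\mathcal{H}$ has real part at least $1 > 0$; hence $\mathcal{H} \subset \mathbb{C}_{\Re(z)>0}$, with the boundary circle passing through $\zeta=1$ and staying strictly in the open half-plane elsewhere.

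Next I would set $\alpha \up\up z := \phi^{\circ z}(1) = \dfrac{d^{z-1}}{dw^{z-1}}\Big|_{w=0}\vartheta(w)$ with $\vartheta(w) = \sum_{n=0}^\infty \phi^{\circ n+1}(1)\frac{w^n}{n!}$, exactly as displayed in the text. By Theorem \ref{thmREPFRC} this is holomorphic on $\mathbb{C}_{\Re(z)>0}$ and takes values in $\mathcal{A}_0$. Applying the Contraction Lemma at $\xi = 1$ (noting $1 \in \mathcal{A}_0$, since $\phi^{\circ n}(1) = {}^n\alpha \to \omega$, a fact attributed to Euler in the text) gives $|\alpha\up\up z - \omega| \le |1-\omega|$ for all $\Re(z)>0$, i.e. $\alpha\up\up z \in \mathcal{H}$. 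Combining with the containment $\mathcal{H} \subset \mathbb{C}_{\Re(z)>0}$ yields $\alpha\up\up z \in \mathbb{C}_{\Re(z)>0}$, which is the asserted mapping property. Holomorphy (analyticity) in $z$ on the right half-plane is inherited directly from Theorem \ref{thmREPFRC}; joint analyticity in $(\alpha,z)$ on $(1,\eta)\times\mathbb{C}_{\Re(z)>0}$ follows because $\omega = \omega(\alpha)$, the multiplier $\log\omega$, the Schr\"oder data, and hence the iterates $\phi^{\circ n+1}(1)$ all depend analytically (indeed real-analytically in $\alpha$) on the parameter, and the series defining $\vartheta$ together with the differintegral transform preserve this dependence on compact subsets.

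The main obstacle is the honest verification that the values stay in the right half-plane, and this is precisely what the Contraction Lemma was built to deliver; so the only genuine work left is checking the hypotheses of that lemma apply at the specific base point $\xi=1$ and that the constant $|1-\omega|$ is the correct radius. I would also want to double-check the edge case: the Contraction Lemma gives a non-strict inequality, so a priori $\alpha\up\up z$ could touch $\partial\mathcal{H}$; but $\partial\mathcal{H}\setminus\{1\}$ still lies in the open right half-plane, and the point $\zeta=1$ itself has $\Re(1)=1>0$, so even boundary contact is harmless — the conclusion $\Re(\alpha\up\up z)>0$ holds throughout. One should remark that $\alpha\up\up 1 = \phi^{\circ 1}(1) = \alpha^1 = \alpha$ and $\alpha\up\up 0 = 1$ recover the expected initial conditions, confirming the normalization is consistent with the $F_1$ and $F_n(1)=\alpha$ requirements stated in the introduction.

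\begin{proof}
Set $\phi(\xi)=\alpha^\xi$ for $1<\alpha<\eta$. As noted in the text, $\phi$ has an attracting fixed point $\omega\in(1,e)$ with $\alpha=\omega^{1/\omega}$ and multiplier $\phi'(\omega)=\log\omega\in(0,1)$, and $\phi'$ never vanishes; thus Theorem \ref{thmREPFRC} applies and produces the holomorphic iteration $\phi^{\circ z}(\xi):\mathbb{C}_{\Re(z)>0}\times\mathcal{A}_0\to\mathcal{A}_0$ on the immediate basin $\mathcal{A}_0$ of $\omega$. Since the natural iterates satisfy $\phi^{\circ n}(1)=\alpha^{\,\cdot^{\,\cdot^{\,\cdot^\alpha}}}\to\omega$, the point $1$ lies in $\mathcal{A}_0$, so we may define
\[
\alpha\up\up z=\phi^{\circ z}(1)=\frac{d^{z-1}}{dw^{z-1}}\Big|_{w=0}\vartheta(w),\qquad \vartheta(w)=\sum_{n=0}^\infty\phi^{\circ n+1}(1)\frac{w^n}{n!},
\]
which is holomorphic in $z$ on $\mathbb{C}_{\Re(z)>0}$ and takes values in $\mathcal{A}_0\subset\mathbb{C}$.

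By the Contraction Lemma \ref{lmaCNTRC} applied at $\xi=1$,
\[
|\alpha\up\up z-\omega|\le|1-\omega|\qquad\text{for all }\Re(z)>0,
\]
so $\alpha\up\up z$ lies in the closed disc $\mathcal{H}=\{\zeta:|\zeta-\omega|\le|1-\omega|\}$. Because $1<\omega$, every $\zeta\in\mathcal{H}$ satisfies $\Re(\zeta)\ge\omega-|1-\omega|=\omega-(\omega-1)=1>0$, hence $\mathcal{H}\subset\mathbb{C}_{\Re(z)>0}$. Therefore $\alpha\up\up z\in\mathbb{C}_{\Re(z)>0}$ for every $z$ with $\Re(z)>0$. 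Finally, the dependence on the base is analytic: $\omega=\omega(\alpha)$, the multiplier $\log\omega$, and consequently the iterates $\phi^{\circ n+1}(1)$ and the transform producing $\phi^{\circ z}$ all vary analytically with $\alpha\in(1,\eta)$ on compact subsets, so
\[
\alpha\up\up z:(1,\eta)\times\mathbb{C}_{\Re(z)>0}\to\mathbb{C}_{\Re(z)>0}
\]
is analytic in both variables, as claimed.
\end{proof}
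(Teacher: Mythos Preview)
Your proof is correct and follows essentially the same route as the paper: apply Theorem \ref{thmREPFRC} to $\phi(\xi)=\alpha^\xi$, invoke the Contraction Lemma \ref{lmaCNTRC} at $\xi=1$ to trap $\alpha\up\up z$ in the disc $\mathcal{H}$, and observe $\mathcal{H}\subset\mathbb{C}_{\Re(z)>0}$. You add a few details the paper leaves implicit---the explicit computation $\omega-|1-\omega|=1$, the edge-case check on $\partial\mathcal{H}$, and the analytic dependence on $\alpha$---but the skeleton is identical.
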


We use this theorem, and everything we've built up, to define a sequence of results. Or rather, an algorithm for constructing hyper-operations. In many ways, much of this work is pretty leveled and exists in previous work. The main, je ne sais quoi, of this method, is of designing an algorithm using integral transforms. 

\section{Bounded and Analytic Hyper-operators}\label{sec5}
\setcounter{equation}{0}

This section will be a quick proof by induction. For that reason, assume we've constructed a hyper-operator,

\[
\alpha \up^{n-1} z : (1,\eta)\times\mathbb{C}_{\Re(z) > 0} \to \mathbb{C}_{\Re(z) > 0}\\
\]

And we'll use this to construct a hyper-operator at $n$. Since we know that $\alpha \up^{n-1} \mathbb{R}^+ \subset (1,e)$, at least. We know there's a fixed point on the real line; and additionally that its attracting on the real line. Call this number $\omega_{n}$. We know the multiplier of this fixed point, $\lambda$, satisfies at least, $0 \le \lambda \le 1$. 

Since $\alpha \up^{n-1} z$ maps a simply connected domain to itself, and has a fixed point, and is not a Linear Fractional Transformation, it must have a multiplier $\lambda < 1$. Assume, additionally that $\alpha \up^{n-1}(z)$ has a non-vanishing derivative; then $0 < \lambda < 1$. Therefore it has a geometrically attracting fixed point and we can construct,

\[
\alpha \up^{n} z = \phi^{\circ z}(1)\\
\]

Where, $\phi(\xi) = \alpha \up^{n-1} \xi$. We additionally know by The Injective Lemma \ref{thmINJ} and The Contraction Lemma \ref{lmaCNTRC}; that we must have,

\[
\alpha \up^{n} z : (1,\eta) \times \mathbb{C}_{\Re(z) > 0} \to \mathbb{C}_{\Re(z) > 0}\\
\]

So, all that's left is to show its derivative is non-vanishing. We provide this as a quick lemma.

\begin{lemma}
The function $\phi^{\circ z}(\xi)$ from Theorem \ref{thmREPFRC}, satisfies,

\[
\frac{d}{dz}\phi^{\circ z}(\xi) \neq 0\\
\]
\end{lemma}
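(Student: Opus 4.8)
The plan is to derive the non-vanishing of $\frac{d}{dz}\phi^{\circ z}(\xi)$ from the Schröder coordinate, where the statement becomes essentially transparent, and then transport it back. Recall that on a neighborhood of $\xi_0$ we have $\phi^{\circ z}(\xi) = \Psi^{-1}(\lambda^z \Psi(\xi))$ with $\lambda = \phi'(\xi_0) \in (0,1)$ and $\Psi$ biholomorphic near $\xi_0$ with $\Psi(\xi_0)=0$, $\Psi'(\xi_0) \neq 0$. Differentiating in $z$ by the chain rule gives
\[
\frac{d}{dz}\phi^{\circ z}(\xi) = (\Psi^{-1})'\big(\lambda^z \Psi(\xi)\big)\cdot \lambda^z \log(\lambda)\cdot \Psi(\xi).
\]
Now $\log(\lambda) \neq 0$ since $0 < \lambda < 1$; $\lambda^z \neq 0$ for every $z \in \mathbb{C}$; and $(\Psi^{-1})'$ is never zero since $\Psi^{-1}$ is biholomorphic onto its image (its derivative is the reciprocal of $\Psi'$ composed appropriately). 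The only factor that can vanish is $\Psi(\xi)$, and $\Psi(\xi) = 0$ exactly when $\xi = \xi_0$ (locally $\Psi$ is injective). So on the punctured neighborhood, and for $\xi \neq \xi_0$, the derivative in $z$ is nowhere zero.

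The remaining work is to promote this from a neighborhood of $\xi_0$ to the whole immediate basin $\mathcal{A}_0$, and to handle $\xi = \xi_0$. For the first point, I would use the functional equation $\phi(\phi^{\circ z}(\xi)) = \phi^{\circ z+1}(\xi)$, equivalently $\phi^{\circ z}(\phi(\xi)) = \phi^{\circ z+1}(\xi)$: differentiating in $z$ gives $\frac{d}{dz}\phi^{\circ z+1}(\xi) = \frac{d}{dz}\phi^{\circ z}(\phi(\xi))$, so the zero set of $\frac{d}{dz}\phi^{\circ z}(\cdot)$ is invariant under $\phi$, and since every $\xi \in \mathcal{A}_0$ has some forward iterate $\phi^{\circ m}(\xi)$ inside the Schröder neighborhood where we have already shown non-vanishing (for $\Re(z)$ large the orbit lands near $\xi_0$; more carefully, for $\xi \in \mathcal{A}_0$ and any fixed $z$ with $\Re(z)>0$, $\phi^{\circ m}(\phi^{\circ z}(\xi)) \to \xi_0$), we can pull the non-vanishing back through finitely many applications of $\phi$, using that $\phi'(\xi) \neq 0$ by hypothesis so that the chain rule introduces no spurious zeros. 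For the point $\xi = \xi_0$: there $\phi^{\circ z}(\xi_0) = \xi_0$ is constant in $z$, so $\frac{d}{dz}\phi^{\circ z}(\xi_0) = 0$ identically — so the lemma as literally stated must be read with the tacit hypothesis $\xi \neq \xi_0$, or equivalently $\xi$ in the punctured immediate basin $\mathcal{A}_0 \setminus \{\xi_0\}$, which is the only case used in Section 5 since $\phi^{\circ z}(1)$ with $1 \neq \omega_n$.

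The main obstacle I anticipate is the transport step: making rigorous that the zero set being $\phi$-invariant, together with the dynamics, forces non-vanishing on all of $\mathcal{A}_0 \setminus\{\xi_0\}$. One has to be a little careful that "every orbit eventually enters the Schröder neighborhood" applies to the point $\phi^{\circ z}(\xi)$ rather than $\xi$ itself, and that the finitely-many pullbacks by $\phi^{-1}$ are well-defined and derivative-nonvanishing — this is exactly where the standing assumption $\phi'(\xi)\neq 0$ earns its keep, since otherwise the chain rule $\frac{d}{dz}\phi^{\circ z+1}(\xi) = \phi'\big(\phi^{\circ z}(\xi)\big)\frac{d}{dz}\phi^{\circ z}(\xi)$ could manufacture a zero. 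Everything else is routine chain-rule bookkeeping on the Schröder conjugacy.
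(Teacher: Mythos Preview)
Your argument is correct and takes a genuinely different route from the paper. The paper argues by contradiction through its own machinery: assuming $H(z)=\frac{d}{dz}\phi^{\circ z}(\xi)$ vanishes at some $z_0$, the chain rule forces $H(z_0+n)=0$ for all $n$; since $H$ is bounded and periodic in the imaginary direction it lies in $\mathbb{E}_\theta$, and the Differintegral Isomorphism (Theorem \ref{thmDIFFISO}) then forces $H\equiv 0$, a contradiction. You instead compute the derivative explicitly in the Schr\"{o}der coordinate, where each factor is visibly nonzero, and then transport the conclusion across $\mathcal{A}_0$ using the chain rule and the standing hypothesis $\phi'\neq 0$. Your approach is more elementary and self-contained---it bypasses the $\mathbb{E}_\theta$ uniqueness principle entirely---while the paper's proof reinforces the theme that the differintegral representation alone suffices to recover dynamical facts. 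One small sharpening: the exceptional set is not just $\{\xi_0\}$ but the full backward orbit $\{\xi:\phi^{\circ k}(\xi)=\xi_0\text{ for some }k\ge 0\}$, since at any such point $\phi^{\circ z}(\xi)$ is eventually (hence identically, by your own interpolation argument) equal to $\xi_0$; the paper's proof has the same tacit restriction, and in the application of Section \ref{sec5} the relevant $\xi=1$ is never a preimage of the fixed point because the real orbit is strictly monotone.
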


\begin{proof}
Assume that,

\[
\frac{d}{dz}\phi^{\circ z_0} = 0\\
\]

Then,

\[
\frac{d}{dz} \phi^{\circ n}(\phi^{\circ z_0})= \frac{d}{dz} \phi^{\circ z_0 + n} = 0\\
\]

The function,

\[
H(z) = \frac{d}{dz}\phi^{\circ z}\\
\]

Is periodic, with purely imaginary period; and it tends to $0$ as $\Re(z) \to \infty$. Therefore,

\[
||H(z)||_{\mathbb{C}_{\Re(z)>0}} \le M\\
\]

For some $M$. Therefore $H \in \mathbb{E}_\theta$ for $\theta \le \pi/2$. Since,

\[
H(z_0 + n) = 0\\
\]

We must have, by using The Differintegral Isomorphism \ref{thmDIFFISO},

\[
H(z_0 + z) = 0\\
\]

Which is a contradiction.
\end{proof}

Therefore our function $\alpha \up^n z$ has a non-vanishing derivative. With that, we state the algorithm.

\begin{theorem}[The Bounded Analytic Hyper-operator Algorithm]\label{ALGBND}
Define a sequence of functions $\alpha \up^n z$, starting with $\alpha \up^0 z = \alpha \cdot z$; and a sequence of functions,

\[
\vartheta_n(w,\xi) = \sum_{k=0}^\infty \left(\alpha \up^{n-1} \alpha \up^{n-1}\cdots(k+1\,\text{times})\cdots\up^{n-1}\alpha\right)\frac{w^k}{k!}\\
\]

Where,

\[
\alpha \up^n z = \frac{d^{z-1}}{dw^{z-1}}\Big{|}_{w=0} \vartheta_n(w)\\
\]

Then,

\begin{enumerate}
    \item $\alpha \up^n z : (1,\eta)\times \mathbb{C}_{\Re(z) > 0} \to \mathbb{C}_{\Re(z) > 0}$ and is analytic.
    \item $\alpha \up^n x : (1,\eta) \times \mathbb{R}^+ \to (1,e)$ and is monotone.
    \item $\alpha \up^n z$ has a purely imaginary period, and is injective in a strip of width of said period.
    \item $\frac{d}{dz} \alpha \up^n z \neq 0$.
    \item The functional equation $\alpha \up^{n-1} \alpha \up^n z = \alpha \up^n (z+1)$ is always satisfied. And $\alpha \up^n 1 = \alpha$.
\end{enumerate}
\end{theorem}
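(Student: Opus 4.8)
The plan is to run the induction suggested by the section's preamble, using Theorem~\ref{thmREPFRC} as the engine that converts each already-constructed layer $\alpha \up^{n-1} z$ into the next. The base case $n=1$ is the Bounded Analytic Tetration Theorem~\ref{thmBNDTET} (and the trivial $n=0$ case $\alpha \cdot z$), so assume inductively that $\alpha \up^{n-1} z$ satisfies items (1)--(5). First I would verify the hypotheses of Theorem~\ref{thmREPFRC} for $\phi(\xi) = \alpha \up^{n-1} \xi$: by the inductive hypothesis (2) this maps $\mathbb{R}^+$ into $(1,e)$, so by continuity and monotonicity it has a real fixed point $\omega_n$ with $1 < \omega_n < e$, attracting on the reals, hence with multiplier $\lambda = \tfrac{d}{d\xi}\alpha\up^{n-1}\xi\big|_{\omega_n} \in [0,1]$; the Schwarz--Pick/Denjoy--Wolff style argument sketched in the preamble (a holomorphic self-map of a simply connected domain, not a Möbius map, fixing an interior point) forces $\lambda < 1$, and inductive hypothesis (4) forces $\lambda \neq 0$, so $0 < \lambda < 1$ and the fixed point is geometrically attracting with non-vanishing derivative everywhere (hypothesis (4) again). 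Then Theorem~\ref{thmREPFRC} delivers $\alpha \up^n z := \phi^{\circ z}(1)$ as a holomorphic map $\mathbb{C}_{\Re(z)>0} \times \mathcal{A}_0 \to \mathcal{A}_0$ together with the representation via $\vartheta_n$, giving item (1)'s analyticity and the representation formula in the theorem statement; that $\alpha\up^n 1 = \phi^{\circ 1}(1) = \phi(1) = \alpha\up^{n-1}\alpha = \alpha$ (using $\alpha\up^{n-1}1 = \alpha$ from the inductive hypothesis, or more directly the natural-iterate initial condition) settles the second half of item (5).

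Next I would harvest items (3), (4), (5) and the real-variable part of (2). Item~(3): periodicity with purely imaginary period $2\pi i/\log\lambda$ is immediate from the Schr\"{o}der representation $\phi^{\circ z} = \Psi^{-1}(\lambda^z\Psi(\cdot))$, and injectivity on a strip of that width is exactly the Injectivity Lemma~\ref{thmINJ}. Item~(5) functional equation: $\alpha\up^{n-1}(\alpha\up^n z) = \phi(\phi^{\circ z}(1)) = \phi^{\circ z+1}(1) = \alpha\up^n(z+1)$, which is the defining semigroup property of the iterate. Item~(4): this is precisely the lemma just proved in the excerpt (the function $H(z) = \tfrac{d}{dz}\phi^{\circ z}$ lies in $\mathbb{E}_\theta$, vanishes on an arithmetic progression, hence vanishes identically by the Differintegral Isomorphism~\ref{thmDIFFISO}, a contradiction), so I would simply invoke it. For item~(2), for real base $\alpha \in (1,\eta)$ and real $x > 0$: $\alpha\up^n x = \phi^{\circ x}(1)$ is real because $\phi$ is real-analytic with real fixed point and $1$ is real, it is monotone in $x$ because $\tfrac{d}{dx}\alpha\up^n x \neq 0$ by (4) and the value is real, and it stays in $(1,e)$ because the Contraction Lemma~\ref{lmaCNTRC} gives $|\alpha\up^n x - \omega_n| \le |1 - \omega_n|$, pinning the orbit inside the real interval $[1,\omega_n] \subset (1,e)$; monotonicity in the base $\alpha$ follows from monotone dependence of $\omega_n$ and of the natural iterates on $\alpha$.

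The genuine obstacle, and the one step I would spend real care on, is item~(1)'s codomain claim: that $\alpha\up^n z$ maps all of $\mathbb{C}_{\Re(z)>0}$ (not just into $\mathcal{A}_0$, which lives inside $(1,e)$ only on the real slice) into the right half-plane $\mathbb{C}_{\Re(\zeta)>0}$. This is the same subtlety flagged as ``the only difficult part'' in the tetration section, and the mechanism is the same: the Contraction Lemma~\ref{lmaCNTRC} gives $|\alpha\up^n z - \omega_n| \le |1 - \omega_n|$ for all $\Re(z) > 0$, so the image lies in the closed disc $\mathcal{H}_n = \{\zeta : |\zeta - \omega_n| \le |1-\omega_n|\}$, and one must check $\mathcal{H}_n \subset \mathbb{C}_{\Re(\zeta)>0}$; since $1 < \omega_n < e$ the disc is centered on the real axis at $\omega_n$ with radius $\omega_n - 1 < \omega_n$, so its leftmost point is $1 > 0$, giving $\mathcal{H}_n \subset \mathbb{C}_{\Re(\zeta) \ge 1} \subset \mathbb{C}_{\Re(\zeta)>0}$. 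The delicate point is ensuring the Contraction Lemma genuinely applies with base point $1$ for the full half-plane $\Re(z)>0$ — i.e. that $1$ belongs to the domain $\mathcal{U}$ of that lemma and that the chain of inequalities is not lost when we pass from the $\zeta$-coordinate back to $z$ — which requires knowing $1 \in \mathcal{A}_0$ and that $\phi^{\circ z}(1)$ for $\Re(z)>0$ stays in the region where the lemma was proved; I would write this out explicitly rather than by analogy, since it is exactly where an analogous argument could silently fail if $\mathcal{A}_0$ were thin near $1$. Once the disc containment is nailed down, all five items are in hand and the induction closes.
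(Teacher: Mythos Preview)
Your proposal is correct and follows essentially the same inductive scheme as the paper's Section~\ref{sec5}: verify the geometric-attraction hypotheses of Theorem~\ref{thmREPFRC} for $\phi = \alpha\up^{n-1}\cdot$ via the Schwarz-type argument and the inductive non-vanishing-derivative hypothesis, invoke the Injectivity and Contraction Lemmas, and close with the non-vanishing-derivative lemma; your disc containment argument $\mathcal{H}_n \subset \mathbb{C}_{\Re(\zeta)>0}$ is exactly the paper's mechanism from the tetration base case, and you are in fact more explicit than the paper about items~(2) and~(5) and about the caveat that $1 \in \mathcal{A}_0$. (Minor slip: $\phi(1) = \alpha\up^{n-1}1$, not $\alpha\up^{n-1}\alpha$, though your parenthetical has it right.)
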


\section{Generalizations}\label{sec6}
\setcounter{equation}{0}

There are two generalizations the author would like to briefly add. The first being more closely related to complex dynamics, and the second more closely related to the general fractional calculus theory. We will only briefly justify these things.

If we limit our chain of hyper-operators $\alpha \to \eta$, we have no problem whatsoever. The only anomaly which arises is for the tetration case. This is because we have a neutral fixed point at $e$, rather than a geometrically attracting one. Which is,

\[
\eta \up \up z = \frac{d^{z-1}}{dw^{z-1}}\Big{|}_{w=0} \sum_{k=0}^\infty \exp_\eta^{\circ k+1}(1)\frac{w^k}{k!}\\
\]

The proof of this is different than what we've done here though. One has to construct an iteration in a different manner. This is typically framed as finding an Abel function of $e^\xi -1$; which looks like $\eta^\xi$ under an appropriate conjugation $\xi \mapsto \xi/e - 1$. Then, going by inspection the iterate is an element of $\mathbb{E}_\theta$ for $\theta < \pi/2$, and everything follows in the same manner.

For general neutral fixed points a similar form arises; but we have to be more careful. Abel functions can be a bit more precarious compared to a nice standard iteration. As to this, one should tread carefully when making the correspondence.\\

The second point is as to the benefit of this method in more general scenarios. Consider $A$ an $n\times n$ diagonalizable matrix. And consider the function,

\[
\vartheta(w) = e^{Aw} = \sum_{k=0}^\infty A^k \frac{w^k}{k!}\\
\]

Let's list the eigenvalues $\lambda_1,...,\lambda_n$. Assume there is some sector $S_\theta$ such that,

\[
e^{-\lambda_j w} \to 0\\
\]

Then, the matrix $A^z$ is given by,

\[
A^z = \frac{d^{z}}{dw^{z}}\Big{|}_{w=0} e^{Aw}\\
\]

With eigenvalues $\lambda_1^z,...,\lambda_n^z$. This is pretty easy to prove, and is left to the reader. The reason this becomes important; we can view this as iterating operators. If we take $n = \infty$; and talk about diagonalizable matrices here; we're on the verge of talking about Hilbert Spaces. And in this manner, one can think of ways of iterating operators on Hilbert spaces using fractional calculus.

This, as it leaves the perview of the author, is left to the reader.

\section{In Conclusion}

We thank the reader for their time, and their patience. We hope we have shed light on much of the work the author had done what seems a life-time ago. And what was written, in perhaps too cryptic a manner to be fully appreciated.


\begin{thebibliography}{1}

\bibitem{RAM}
Amdeberhan, T., Espinosa, O., Gonzalez, I. et al. (2012).
\textit{Ramanujan's Master Theorem}
Ramanujan J. \textbf{29} 103-120
https://doi.org/10.1007/s11139-011-9333-y

\bibitem{Mil}
Milnor, John. (2006).
\textit{Dynamics In One Complex Variable.}
Princeton University Press.

\bibitem{Nix}
Nixon, James. (2015).
\textit{On The Indefinite Sum In Fractional Calculus.}
arXiv.

\bibitem{Nix2}
Nixon, James D. (2021).
\textit{Hyper-operations By Unconventional Means}
arXiv.

\bibitem{Old}
Oldham, Keith B., Spanier, Jerome (2006).
\textit{The Fractional Calculus.}
Dover.

\bibitem{Rem}
Remmert, Reinhold. (1998).
\textit{Classical Topics in Complex Function Theory.}
Springer-Verlag New York.

\bibitem{ZAG}
Zagier, D. (2011)
\textit{THE MELLIN TRANSFORM AND RELATED ANALYTIC TECHNIQUES.}
Max Planck Institute For Mathematics.
https://people.mpim-bonn.mpg.de/zagier/files/tex/MellinTransform/fulltext.pdf

	
\end{thebibliography}
\end{document}